\let\epsilon\varepsilon
\newtheorem{thm}{Theorem}
\newtheorem*{thm*}{Theorem}
\newtheorem{lem}[thm]{Lemma}
\newtheorem{prop}[thm]{Proposition}
\theoremstyle{definition}
\theoremstyle{remark}
\newcommand{\RCD}{{\mathrm {RCD}}}
\newcommand{\TestV}{{\mathrm {TestV}}}
\newcommand{\TestF}{{\mathrm {TestF}}}
\newcommand{\LIP}{{\mathrm {LIP}}}
\newcommand{\BV}{{\mathrm {BV}}}
\newcommand{\Gr}{{\mathrm {Gr}}}
\newcommand{\DIFF}{{\mathrm{D}}}
\newcommand{\lip}{{\mathrm {lip}}}
\DeclareMathOperator*{\dive}{div}
\DeclareMathOperator*{\essspan}{ess\,span}
\newcommand{\Id}{\mathrm{Id}}
\newcommand{\XX}{{\mathsf{X}}}
\newcommand{\dist}{{\mathsf{d}}}
\newcommand{\mass}{{\mathsf{m}}}
\newcommand{\capa}{{\mathrm {Cap}}}
\newcommand{\HH}{\mathcal{H}}
\newcommand{\RR}{\mathbb{R}}
\newcommand{\GG}{\mathcal{G}}
\newcommand{\defeq}{\mathrel{\mathop:}=}
\newcommand{\mres}{\mathbin{\vrule height 1.6ex depth 0pt width 0.13ex\vrule height 0.13ex depth 0pt width 1.3ex}}
\newcommand{\mressmall}{\mathbin{\vrule height 1.2ex depth 0pt width 0.11ex\vrule height 0.10ex depth 0pt width 1.0ex}}
\begin{document}
	
		\title[General chain rule for $\BV$ functions]{About the general chain rule for \\functions of bounded variation}
	
	\author[C. Brena]{Camillo Brena}
	\address{C.~Brena: Scuola Normale Superiore, Piazza dei Cavalieri 7, 56126 Pisa} 
	\email{\tt camillo.brena@sns.it}
	
	\author[N. Gigli]{Nicola Gigli}
	\address{N.~Gigli: SISSA, Via Bonomea 265, 34136 Trieste} 
	\email{\tt ngigli@sissa.it}

\begin{abstract}
We give an alternative proof of the general chain rule for functions of bounded variation (\cite{AmbDM}), which allows to compute the distributional differential of $\varphi\circ F$, where $\varphi\in \LIP(\RR^m)$ and $F\in\BV(\RR^n,\RR^m)$.
In our argument we build on top of recently established links between \say{closability of certain differentiation operators} and \say{differentiability of Lipschitz functions in related directions} (\cite{ABM23}): we couple this with the observation that \say{the map that takes $\varphi$ and returns the distributional differential of  $\varphi\circ F$ is closable} to conclude.

Unlike previous results in this direction, our proof can directly be adapted to the non-smooth setting of finite dimensional RCD spaces.
\end{abstract}
\maketitle
\section*{Introduction}
 A classical result about calculus for functions of bounded variation on the Euclidean space is the Vol'pert chain rule  (\cite{vol1985analysis,Vol_pert_1967}): if $F\in\BV_{loc}(\RR^n,\RR^m)$ and $\varphi\in C^1(\RR^m)\cap\LIP(\RR^m)$, it holds that $\varphi\circ F\in\BV_{loc}(\RR^n,\RR)$ and 
\begin{align}
	\DIFF(\varphi\circ F)\mres(\RR^n\setminus J_F)&=\nabla\varphi (F)\DIFF F\mres(\RR^n\setminus J_F),\label{introeq1} \\
	\DIFF(\varphi\circ F)\mres J_F&= \big({\varphi(F^+)-\varphi(F^-)}\big)\otimes \nu_{J_F} \HH^{n-1}\mres J_F\label{introeq2}.
\end{align}
Even though we refer to \cite{AFP00,GMSCartCurr} for the precise definition of the various objects appearing in \eqref{introeq1} and \eqref{introeq2} above, we explain here a bit of notation. Here $$\DIFF F=\frac{\DIFF F}{|\DIFF F|}{|\DIFF F|}$$ is the (polar decomposition of the) distributional of $F$, $J_F$ is the jump set of $F$ with normal $\nu_{J_F}$ and $F^\pm$ are the approximate jump values of $F$ (their order depends on the choice of the sign for $\nu_{J_F}$). Outside $J_F$, here and below, we implicitly take the precise representative of $F$.
\medskip

If we now take $\varphi\in \LIP(\RR^m)$ (not necessarily $C^1$) we can ask whether a similar formula holds for the composition $\varphi\circ F$ (which is still $\BV_{loc}$). Clearly, if this is the case, the formula has to be coherent with \eqref{introeq1} and \eqref{introeq2}. While \eqref{introeq2} causes no problem, we notice that, in general, \eqref{introeq1} makes no sense, as the image of $F$ may be essentially contained in the set of non-differentiability points of $\varphi$. This problem has been addressed and solved in \cite{AmbDM}, where the authors made the following crucial remark: we do not actually need the full differentiability of $\varphi$ at $F(x)$ for $|\DIFF F|$-a.e.\ $x\notin J_F$ to state \eqref{introeq1}. Indeed, the differential of $\varphi$ is only tested against the image of $\frac{\DIFF F}{|\DIFF F|}(x)$ at $F(x)$, so that a weaker form of differentiability is needed. More precisely, we only need the differentiability at $F(x)$ of the restriction of $\varphi$ to the affine subspace $$T_x^F\defeq F(x)+{\rm span}\bigg(\frac{\DIFF F}{|\DIFF F|}{}(x)\bigg)$$
for $|\DIFF F|$-a.e.\ $x\notin J_F$. We call $\nabla_{T_x^F}\varphi$, if exists, this differential. Notice that, by its very definition, the differentiability of the restriction of $\varphi$ to $T_x^F$ at $F(x)$ is equivalent to 
$$
\varphi(F(x)+w)-\varphi(F(x))=\nabla_{T_x^F}\varphi(F(x)) \,\cdot\, w+o(|w|)\qquad\text{for $w\in {\rm span}\bigg(\frac{\DIFF F}{|\DIFF F|}{}(x)\bigg) $}.
$$ 

In \cite{AmbDM} the authors proved that, for $\varphi\in\LIP(\RR^m)$ and $F\in\BV_{loc}(\RR^n,\RR^m)$, then the restriction of $\varphi$ to the affine subspace $T_x^F$ is differentiable at $F(x)$ for $|\DIFF F|$-a.e.\ $x\notin J_F$, that \eqref{introeq1} holds with $\nabla_{T_x^F}\varphi$ in place of $\nabla \varphi$ and that 
\eqref{introeq2} holds. As the proof of \eqref{introeq2} is folklore in what follows we address \eqref{introeq1} and, in particular, the issue of differentiability of Lipschitz functions.

The starting point of the proof of \cite{AmbDM} is to notice that it is enough to show that for $v\in\RR^n\setminus\{0\}$ fixed arbitrarily, for $|\DIFF F|$-a.e.\ $x\notin J_F$ then the restriction of $\varphi$ to the subspace 
$$
T^{F,v}_x\defeq F(x)+{\rm span}\bigg(\frac{\DIFF F}{|\DIFF F|}(x)\,\cdot\, v\bigg)
$$
is differentiable at $F(x)$ and that, calling $\nabla_{T^{F,v}_x}\varphi$ this differential, it holds that

\begin{equation}\label{fwdnj}
	\DIFF(\varphi\circ F)\,\cdot v\mres (\RR^n\setminus J_F)=\nabla_{T^{F,v}_x}\varphi(F(x))\DIFF F \,\cdot\,v\mres (\RR^n\setminus J_F),
\end{equation}
which is precisely \eqref{introeq1} tested against $v$. Then the claim follows by classical analysis of Lipschitz functions. We are going to take advantage of this last remark and in what follows we discuss about the proof of this fact.

The proof of \cite{AmbDM} worked by slicing on an hyperplane $H^v$ orthogonal to $v$: they first proved the result for functions $\BV_{loc}(\RR,\RR^m)$ (given by $F^v_x( t)\defeq F(x+t v)$ for $x\in H^v$) and then concluded using the well known superposition formula $$\DIFF F\,\cdot\, v=\int_{H^v}\DIFF F_x^v\dd{\HH^{n-1}(x)}.$$
Motivated by studies in the non-smooth setting - see below - where slicing procedures seems harder to get, we propose here a different strategy.

First, we can assume that $F$ has a Borel inverse. We can indeed replace $F$ by $$\RR^n\ni x\mapsto (F(x),x)\in\RR^{m+n}$$ and $\varphi$ by $$\RR^{m+n}\ni(x,y)\mapsto \varphi(x)\in\RR$$
and it turns out that the claim can be proved for this new couple of maps, that we still call $F$ and $\varphi$.
We consider now 
$$
w\defeq \bigg(\frac{\DIFF F}{|\DIFF F|}\,\cdot\, v\bigg)\circ F^{-1}\qquad\text{and}\qquad\mu\defeq F_*(|\DIFF F|\mres K),
$$
where $K\subseteq \RR^n\setminus J_F$ is any compact set. 
Then we define the differentiation operator $D$
 $$
C^1(\RR^m)\cap\LIP(\RR^m)\ni \psi\mapsto D(\psi)\defeq \nabla \psi\,\cdot\,w\in L^\infty(\mu).
$$
The next step is to verify that this operator enjoys the following closability property: for every $\{\psi_k\}_k\subseteq C^1(\RR^m)\cap\LIP(\RR^m)$ equi-bounded and equi-Lipschitz functions converging pointwise to $\psi\in\LIP_b(\RR^m)$, then $D(\psi_k)\rightarrow \ell$ in the weak* topology of $L^\infty(\mu)$, for some $\ell\in L^\infty(\mu)$. This follows from an approximation result, the Vol'pert chain rule formula and the closability of the map $\varphi\mapsto\DIFF(\varphi\circ F)$. The key computation is as follows: if $h\in L^1(\mu)$, then, neglecting evident regularity issues (here the approximation procedure), 
\begin{align*}
	\int_{\RR^m}h\nabla\psi_k\,\cdot\,w\dd{\mu}&=\int_{\RR^n}\chi_K h (F)\nabla\psi_k(F)\dd{{\DIFF F}\,\cdot\,v}=\int_{\RR^n}\big(\chi_K h(F) v\big)\dd{\DIFF(\psi_k\circ F)}\\
	&=-\int_{\RR^n} \psi_k\circ F \dive \big(\chi_K h (F) v\big)\dd{\mathcal L^n}\rightarrow-\int_{\RR^n} \psi\circ F \dive \big(\chi_K h (F) v\big)\dd{\mathcal L^n}\\
	&=\int_{\RR^n}\big(\chi_K h (F) v\big)\dd{\DIFF(\psi\circ F)}=\int_{\RR^n}\chi_K h (F)\frac{\DIFF(\psi\circ F)}{|\DIFF(\psi\circ F)|}\,\cdot\,v\dv{|\DIFF(\psi\circ F)|}{|\DIFF  F|}\dd{|\DIFF F|}\\
	&=\int_{\RR^m}h\, \bigg(\frac{\DIFF(\psi\circ F)}{|\DIFF(\psi\circ F)|}\,\cdot\,v\dv{|\DIFF(\psi\circ F)|}{|\DIFF  F|}\bigg)\circ F^{-1}\dd{\mu}.
\end{align*}

This closability property allows us to apply Theorem \ref{AMthm} (which is a restatement of results contained in \cite{AM16,ABM23}) which shows that Lipschitz functions are differentiable in direction $w(x)$ for $\mu$-a.e.\ $x\in\RR^m$ and, reading back this information in $\RR^n$ we conclude the proof of differentiability and then \eqref{fwdnj} follows. 
\medskip

The purpose of this note is to prove the general chain rule in the realm of $\RCD(K,N)$ spaces (\cite{AmbrosioMondinoSavare13-2}, \cite{Gigli12} after  \cite{Sturm06I,Sturm06II}, \cite{Lott-Villani09} -  see also the surveys \cite{AmbICM,Villani2017,gigli2023giorgi}), which are metric measure spaces satisfying, in a synthetic sense, a bound from below for the Ricci curvature and a bound from above for the dimension. Indeed, starting from the seminal papers \cite{MIRANDA2003,amb00,amb01} (see e.g.\ \cite{DiM14a} and references therein) for what concerns general metric measure spaces and following \cite{ambrosio2018rigidity,bru2019rectifiability,bru2021constancy, BGBV} for what concern $\RCD(K,N)$ spaces, it is clear that a reasonable theory can be developed for functions of bounded variation on non-smooth spaces.
Here we state the general chain rule for $\RCD(K,N)$ spaces (Theorem \ref{gcthm}). For the interpretation of the various objects appearing, we refer to the rest of the paper, but the meaning such objects can also be  deduced by comparison with \eqref{introeq1} and \eqref{introeq2}.   
\begin{thm*}
	Let $(\XX,\dist,\mass)$ be an $\RCD(K,N)$ space, let $F\in\BV_{loc}(\XX)^m$ and let $\varphi\in \LIP(\RR^m)$.
	Then  
	\begin{align*}\notag
		\nu_{\varphi\circ F}|\DIFF(\varphi\circ F)|\mres (\XX\setminus J_F)&=\nabla_V\varphi(F) \nu_F {|{\DIFF F}|}\mres (\XX\setminus J_F),\\
		\nu_{\varphi\circ F}|\DIFF(\varphi\circ F)|\mres J_F&=\frac{\varphi(F^r)-\varphi(F^l)}{|F^r-F^l|} \nu_F^J {|{\DIFF F}|}\mres  J_F,
	\end{align*}
	where we implicitly state that for $|\DIFF F|$-a.e.\ $x\notin J_F$, $\varphi$ is differentiable at $F(x)$ with respect to $V$, which is the image, in a suitable sense, of $\nu_F$.
\end{thm*}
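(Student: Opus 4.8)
The plan is to transplant to the non-smooth setting the reorganization, described in the Introduction, of the argument of \cite{AmbDM}; the point is that that argument interacts with the source space only through the $\BV$-calculus, and never through a slicing procedure. The jump part is disposed of first: granted the rectifiability and fine structure theory of $\BV$ functions on $\RCD(K,N)$ spaces (\cite{bru2019rectifiability,BGBV}), together with the identities $(\varphi\circ F)^{r}=\varphi(F^r)$, $(\varphi\circ F)^{l}=\varphi(F^l)$ holding at $|\DIFF F|$-a.e.\ point of $J_F$ since $\varphi$ is continuous (with $J_{\varphi\circ F}\subseteq J_F$, the formula being trivially true where $\varphi(F^r)=\varphi(F^l)$), the second formula in Theorem \ref{gcthm} follows as in $\RR^n$. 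Henceforth I concentrate on the diffuse part on $\XX\setminus J_F$. By $\sigma$-finiteness of $|\DIFF F|$ and locality it suffices to argue on an arbitrary compact $E\subseteq\XX\setminus J_F$, and, exploiting the rectifiable structure of $\RCD(K,N)$ spaces, I may assume $E$ lies in a single chart bi-Lipschitz onto a subset of $\RR^k$, $k\le N$, through some $\iota\in\LIP(\XX)^k$. Replacing $F$ by $(F,\iota)$ and $\varphi$ by $(y,z)\mapsto\varphi(y)$ — which I keep denoting $F$ and $\varphi$ — makes $F$ injective on $E$ with Borel inverse, without altering $J_F$ (the Lipschitz coordinates $\iota$ contribute nothing to the jump set) nor the content of the claim (as $\nabla\varphi$ annihilates the $\DIFF\iota$-terms); truncating $\varphi$ I also assume $\varphi\in\LIP_b(\RR^{m+k})$.

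Next I set up the target-side picture on $\RR^{m+k}$, in parallel with the Introduction. Fix a countable family $\{b_j\}_j\subseteq\TestV(\XX)$ with $\dive b_j\in L^\infty\cap L^1(\mass)$, with bounded supports containing $E$, and such that $\{b_j(x)\}_j$ generates $T_x\XX$ for $\mass$-a.e.\ $x$ — such a family exists on $\RCD(K,N)$ spaces, e.g.\ taking suitably cut-off gradients of test functions. Set $\mu\defeq F_*(|\DIFF F|\mres E)$, a finite Borel measure on $\RR^{m+k}$, and, for each $j$, define
\[
	C^1(\RR^{m+k})\cap\LIP(\RR^{m+k})\ni\psi\longmapsto D_j(\psi)\defeq\nabla\psi\cdot V_j\in L^\infty(\mu),\qquad V_j\defeq\big(\nu_F(b_j)\big)\circ F^{-1},
\]
where $\nu_F$ is the polar vector of $\DIFF F$ and $\nu_F(b_j)\in L^\infty(|\DIFF F|;\RR^{m+k})$ its pairing with $b_j$. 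By the Vol'pert chain rule for $C^1\cap\LIP$ functions on $\RCD$ spaces — so that $\DIFF(\psi\circ F)=\nabla\psi(F)\,\nu_F\,|\DIFF F|$ on $\XX\setminus J_F$ — one checks that $D_j(\psi)$ is also the $\mu$-density of $F_*\big(\chi_E\,\DIFF(\psi\circ F)(b_j)\big)$, where $\DIFF(\psi\circ F)(b_j)$ denotes the signed measure obtained by pairing $\DIFF(\psi\circ F)$ with $b_j$.

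The heart of the matter is the closability of each $D_j$: given $\{\psi_i\}_i\subseteq C^1\cap\LIP(\RR^{m+k})$ equi-bounded, equi-Lipschitz and converging pointwise to $\psi\in\LIP_b(\RR^{m+k})$, the sequence $D_j(\psi_i)$ converges in the weak* topology of $L^\infty(\mu)$ to some $\ell_j$. It is equi-bounded there, and for $h\in L^1(\mu)$ the chain of identities of the Introduction reads, neglecting regularity issues,
\[
	\int_{\RR^{m+k}}h\,D_j(\psi_i)\dd{\mu}=\int_{\XX}\chi_E\,h(F)\dd{\DIFF(\psi_i\circ F)(b_j)}=-\int_{\XX}(\psi_i\circ F)\,\dive\!\big(\chi_E\,h(F)\,b_j\big)\dd{\mass},
\]
and one passes to the limit $i\to\infty$ by dominated convergence (using $\dive(\cdot)\in L^1(\mass)$, equi-boundedness of $\psi_i\circ F$ and $\psi_i\circ F\to\psi\circ F$ pointwise); reading the limit back through $F$ identifies $\ell_j$ with $\big(\tfrac{\DIFF(\psi\circ F)}{|\DIFF(\psi\circ F)|}(b_j)\,\dv{|\DIFF(\psi\circ F)|}{|\DIFF F|}\big)\circ F^{-1}$. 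The only ingredients are the integration-by-parts definition of $\BV$ on $\RCD$ spaces — which already encodes the closability of $\psi\mapsto\DIFF(\psi\circ F)$ — the $C^1\cap\LIP$ Vol'pert chain rule, and a standard approximation of the rough, non-Lipschitz vector field $\chi_E\,h(F)\,b_j$ by good ones making the integration by parts legitimate.

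Finally, closability of $D_j$ lets me invoke Theorem \ref{AMthm}: every Lipschitz function on $\RR^{m+k}$ is differentiable at $\mu$-a.e.\ $y$ along the direction $V_j(y)$, with directional derivative computing $\ell_j$. Letting $j$ range over the generating family, using that $\DIFF(\psi\circ F)(\cdot)$ is linear in the test vector field — whence $\partial_{V_j(y)}\psi(y)$ depends linearly on $b_j$, so that $\partial_\eta\psi(y)$ is linear in $\eta$ over $\mathrm{span}\{V_j(y)\}_j$ for $\mu$-a.e.\ $y$ — together with the facts that $\{b_j(x)\}_j$ generates $T_x\XX$ and that a Lipschitz function on a finite-dimensional space is differentiable at a point whenever all its directional derivatives there exist and depend linearly on the direction, I conclude that $\psi$ is differentiable at $\mu$-a.e.\ $F(x)$ on the affine subspace $F(x)+\mathrm{span}\{V_j(F(x))\}_j$, i.e.\ ``with respect to $V$''. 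Transporting this back through $F^{-1}$ yields the differentiability of $\varphi$ at $F(x)$ for $|\DIFF F|$-a.e.\ $x\notin J_F$, and then testing the first identity of Theorem \ref{gcthm} against the $b_j$ — the equality of measures following as in the computation of $D_j(\varphi)$ above, now with $\nabla\varphi$ replaced by the well-defined $\nabla_V\varphi$ and $\psi$ approximated by smooth functions — completes the proof. I expect the main obstacle to be precisely this package of approximation arguments: running the integration by parts against the non-smooth field $\chi_E\,h(F)\,b_j$ rigorously within the $\RCD$ $\BV$-calculus, and, upstream of that, establishing the $C^1\cap\LIP$ Vol'pert chain rule on $\RCD(K,N)$ spaces with its diffuse term in the form used above.
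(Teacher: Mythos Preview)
Your overall strategy is essentially the paper's own: reduce to an injective $F$ via an augmentation by a chart, push forward to a measure $\mu$ on the target, verify closability of the directional operators via the integration-by-parts definition of $\BV$ together with the Vol'pert rule for $C^1\cap\LIP$ maps, invoke Theorem~\ref{AMthm}, and then assemble the directions into the subspace $V$. The jump part and the closability computation are handled the same way.

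There is, however, a genuine gap in your reduction step. You write that ``exploiting the rectifiable structure of $\RCD(K,N)$ spaces, I may assume $E$ lies in a single chart bi-Lipschitz onto a subset of $\RR^k$ \ldots\ through some $\iota\in\LIP(\XX)^k$''. The rectifiability results you cite (Mondino--Naber, Bru\`e--Pasqualetto--Semola) produce bi-Lipschitz charts covering $\XX$ up to an $\mass$-null set. But on $\XX\setminus J_F$ the measure $|\DIFF F|$ has, in general, a nontrivial Cantor part $|\DIFF F|^C$, which is \emph{singular} with respect to $\mass$; there is no reason why the $\mass$-a.e.\ charts should cover $|\DIFF F|^C$-a.e.\ point. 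Your augmentation $(F,\iota)$ with $\iota$ Lipschitz therefore handles only the absolutely continuous part of $|\DIFF F|$. The paper confronts exactly this issue in Proposition~\ref{charts}: for the Cantor part it passes to the subgraph $\GG_f\subseteq\XX\times\RR$, uses the rectifiability of $\partial^*\GG_f$ (again \cite{bru2019rectifiability}) to build charts there, and projects back. In that construction the resulting chart is only the restriction of a $\BV$ map $\tilde\Psi\in\BV(\XX)^n$ (not Lipschitz), and one must separately check $J_{\tilde\Psi}\cap G=\emptyset$ and $|\DIFF\tilde\Psi|\ll|\DIFF f|+\mass$ before the augmentation $F'=(F,\tilde\Psi)$ can be used as you intend.

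A related imprecision: you ask that the family $\{b_j\}_j$ generate $T_x\XX$ for $\mass$-a.e.\ $x$, but to read the conclusion $|\DIFF F|$-a.e.\ off $\XX\setminus J_F$ you need generation $|\DIFF F|$-a.e., hence at least $\capa$-a.e. The paper secures this via Theorem~\ref{captangent}, which produces, on a Borel partition, orthonormal bases of $L^0_\capa(T\XX)$ built from test vector fields; this is what underlies the construction of $V=|\DIFF F|\text{-}\essspan\,\nu_F$ in Lemma~\ref{essspan}.
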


\medskip

We decided to give directly the proof of the result in the more general framework of $\RCD(K,N)$ spaces (as $(\RR^N,\dist_e,\mathcal L^N)$ is $\RCD(K,N)$). The reader only interested in the case of $\RR^n$ can just read Section \ref{difflip} and the proof of Lemma \ref{mainlemma}. In the latter, we fill the gaps of the proof sketched above. Then only thing to take into into account is a change of notation for what concerns the differential of the restriction of $\varphi$ (see Section \ref{difflip}) and the polar decomposition of the distributional differential, as $\frac{\DIFF F}{|\DIFF F|}|\DIFF F|$ becomes $\nu_F|\DIFF F|$.

\section{Preliminaries}\label{sectprel}
For the sake of brevity, we directly assume the reader to be familiar with the setting of $\RCD$ spaces, (\cite{AmbICM,Villani2017,gigli2023giorgi} and references therein) and with the theory of functions of bounded variation on metric measure spaces (\cite{MIRANDA2003, amb00,amb01,ambmirpal04,ambrosio2018rigidity, bru2019rectifiability,bru2021constancy,BGBV}).
A summary of the prerequisites can be found in the preliminaries section of \cite{BGBV}.
\medskip

We point out that we are going to use the following definition for test vector fields,
\begin{equation}\label{defntest}
	\TestV(\XX)\defeq\left\{ \sum_{i=1}^n f_i\nabla g_i : f_i\in S^2(\XX)\cap L^\infty(\mass),g_i\in\TestF(\XX)\right\},
\end{equation}
where, 
\begin{equation}\notag
	\TestF(\XX)\defeq\left\{f\in\LIP(\RR)\cap L^\infty(\mass)\cap D(\Delta): 	\Delta f\in H^{1,2}(\XX)\cap L^\infty(\mass)\right\}.
\end{equation}
This is a slight difference with the literature that we just recalled, and we adopted this new definition for simplicity of notation in Theorem \ref{captangent} and related material, but we point out that this difference is harmless.

We are going to consider only \emph{finite dimensional} $\RCD$ spaces, so that when we write $\RCD(K,N)$ we always assume $N<\infty$.

\medskip

We recall below the main notions, in a simplified version, as will be the starting point for the theorem proved in this note. Again, for further details, we refer for example to \cite{BGBV}.
To state our results, it is required the notion of (capacitary) normed module,  \cite{Gigli14,debin2019quasicontinuous}; what is needed here is also recalled in \cite[Section 1.3]{bru2019rectifiability}. 

First, we recall the definition of total variation of a (vector valued) locally integrable function: if $(\XX,\dist,\mass)$ is an $\RCD(K,N)$ space and $F\in L^1(\XX,\RR^n)$, we define, for $A\subseteq\XX$ open
	\begin{equation}\notag
	|{\DIFF F}|(A)\defeq \inf \left\{\liminf_k \int_A \Vert (\lip(F_{i,k}))_{i=1,\dots,n}\Vert_e\dd{\mass}\right\}
\end{equation}
where the infimum is taken among all sequences $\{F_{i,k}\}_k\subseteq\LIP_{loc}(A)$ such that $F_{i,k}\rightarrow F_i$ in $L_{loc}^1(A,\mass)$ for every $i=1,\dots,n$.  If $\abs{\DIFF F}(A')<\infty$ for every relatively compact $A'\subseteq\XX$, we say that $F\in\BV_{loc}(\XX,\RR^n)=\BV_{loc}(\XX)^n$ and it turns out that $|\DIFF F|$ is the restriction to open sets of a  Radon measure that we still denote by $|\DIFF F|$ and which satisfies $|\DIFF F|\ll\capa$.  If moreover $|\DIFF F(\XX)|<\infty$, then we say that $F\in\BV(\XX,\RR^n)=\BV(\XX)^n$. 

If $F=(F_1,\dots,F_n)\in\BV(\XX)^n$, there exists a unique, up to $|\DIFF F|$-a.e.\ equality, $\nu_F\in L^0_\capa(T\XX)^n$ such that $|\nu_F|=1\ |\DIFF F|$-a.e.\ and 
\begin{equation}\label{intbyparts}
	\sum_{j=1}^n\int_\XX F_j{\rm div}(v_j)\dd{\mass}=-\int_\XX v\,\cdot\,\nu_F\dd{|\DIFF F|}\quad\text{ for every $v=(v_1,\dots,v_n)\in\TestV(\XX)^n$.}
\end{equation}
Here and in the sequel, we identify quasi-continuous vector fields with the the trace they leave on the $\capa$ or $|\DIFF F|$-tangent module, and we do the same for functions. Also, for $v\in\TestV(\XX)$, it is possible that $\dive v\notin L^\infty(\mass)$. However, the formula above is well posed, interpreting, for $f\in\BV(\XX)$,
$$
\int_\XX f\dive v\dd{\mass}=\lim_k \int_\XX( f\vee k)\wedge k \dive v\dd{\mass}.
$$ 
By an immediate locality procedure, if $F=(F_1,\dots,F_n)\in\BV_{loc}(\XX)^n$, there exists a a unique, up to $|\DIFF F|$-a.e.\ equality, $\nu_F\in L^0_\capa(T\XX)^n$ such that $|\nu_F|=1\ |\DIFF F|$-a.e.\ and \eqref{intbyparts} holds restricted to $n$-tuples of compactly supported test vector fields. Notice that the existence of good cut-off functions (\cite{Mondino-Naber14}) gives the existence of sufficiently many compactly supported test vector field to have uniqueness of $\nu_F$ also in this case. 

It is customary to define, for $f\in \BV_{loc}(\XX)$, the  lower and upper representatives $f^\wedge\le f^\vee$, as well as a precise representative $\bar{f}=\frac{f^\wedge+f^\vee}{2}$. We define the jump set $J_f\defeq\left\{x:f^\wedge< f^\vee\right\}$ (pay attention to the change of notation, what we call here $J_f$ was called $S_f$ in \cite{BGBV}). It is clear that if $\varphi$ is Lipschitz, then $J_{\varphi\circ f}\subseteq J_f$. It is well known that $|\DIFF f|\mres (\XX\setminus J_f)$ does not charge jump sets of functions of bounded variation.
 If $F\in\BV_{loc}(\XX)^n$, then we set $J_F\defeq\bigcup_{i=1}^n J_{F_i}$. We also proved that for $|\DIFF F|$-a.e.\ $x\in J_F$, there exists a unique couple (up to their order) $F^r(x),F^l(x)$ such that, for a suitable set of finite perimeter $E_x$, it holds that $x\in J_{E_x}$ and 
 $$
 \lim_{r\searrow 0}\frac{1}{\mass(B_r(x)\cap E_x)}\int_{B_r(x)\cap E_x}| F-F^l(x)|\dd{\mass}= \lim_{r\searrow 0}\frac{1}{\mass(B_r(x)\setminus E_x)}\int_{B_r(x)\setminus E_x}| F-F^r(x)|\dd{\mass}=0.
 $$
 Moreover, the map $$J_F\rightarrow\RR^n\qquad x\mapsto (F^l(x),F^r(x))$$
 is $|\DIFF F|$-measurable. Also, outside $J_F$, we set the precise representative as the vector whose components are the precise representatives of the components of $F$ and \emph{we implicitly take this representative} for $F$, always. We also proved that there exists a unique, up to $|\DIFF F|\mres J_F$ equality $\capa$-vector field $\nu_F^J$ such that 
$$(F_i^r-F_i^l)\nu_F^J=(F_i^\vee-F_i^\wedge)\nu_{F_i}\qquad|\DIFF F|\mres J_F\text{-a.e.\ for every $i=1,\dots,n$}.$$

As a notation, if $\nu,\nu'\in L^\infty_\capa(T\XX)^n$ and $\mu,\mu'$ are two Radon measure absolutely continuous with respect to $\capa$, we write $\nu\mu=\nu'\mu'$ if, for every $v\in L^\infty_\capa(T\XX)^n$ with compact support it holds
$$
\int_\XX v\,\cdot\,\nu\dd{\mu}= \int_\XX v\,\cdot\,\nu'\dd{\mu'}. 
$$
By density, the equality above is satisfied for every $v\in L^\infty_\capa(T\XX)^n$ with compact support if and only if it is satisfied for every $v\in \TestV(\XX)^n$ with compact support. 
We use this notation to state our calculus rules. For example, if $F\in\BV_{loc}(\XX)^n$ and $\varphi\in C^1(\RR^n)\cap\LIP(\RR^n)$, we have proved 
\begin{align*}
	\nu_{\varphi\circ F}|\DIFF(\varphi\circ F)|\mres (\XX\setminus J_F)&=\nabla\varphi(F)\nu_F|\DIFF F|\mres (\XX\setminus J_F)\\
	\nu_{\varphi\circ F}|\DIFF(\varphi\circ F)|\mres  J_F&=\frac{\varphi(F^r)-\varphi(F^l)}{|F^r-F^l|}\nu_F^J|\DIFF F|\mres J_F.
\end{align*}
\section{Differentiability of Lipschitz functions}\label{difflip}
Given $\varphi\in\LIP(\RR^m,\RR^l)$, we say that $\varphi$ is differentiable at $x$ with respect to $V\in\Gr(\RR^m)$ if there exists a linear map $\nabla_V\varphi(x):V\rightarrow\RR^l$ such that 
$$ \varphi(x+v)=\varphi(x)+\nabla_V\varphi(x)\,\cdot\, v+ o(|v|)\qquad\text{for $v\in V$}.$$
If $v\in\RR^m$, we say that $\varphi$ is differentiable at $x$ in direction $v$ if $\varphi$ is differentiable at $x$ with respect $\langle v\rangle$.
Notice that every $\varphi$ is differentiable with respect to $\{0\}$ at any point of $\RR^m$.

\medskip

We are going to exploit in a crucial way the following result, which is a restatement of results contained in \cite{AM16,ABM23} (see in particular, \cite[Theorem 1.1]{AM16} and \cite[Theorem 1.1]{ABM23}). We refer the reader to these references for the definition of $V(\mu,\,\cdot\,)$, the decomposability bundle associated to the Radon measure $\mu$, as we are not going to use this notion elsewhere.
\begin{thm}\label{AMthm}
	Let $ v\mu$ be a $m$-vector valued measure on $\RR^m$, where $v\in L^\infty( \mu)$ and $\mu$ is finite. Then the following assertions are equivalent.
	\begin{enumerate}[label=\roman*)]
		\item $v(x)\in V(\mu,x)$ for $\mu$-a.e.\ $x$.
	\item Every Lipschitz function is differentiable in direction $v(x)$ for $\mu$-a.e.\ $x$. 
	\item The operator $$D: C^1(\RR^m)\cap\LIP_b(\RR^m)\rightarrow L^\infty(\mu)\qquad \varphi\mapsto \nabla\varphi\,\cdot\, v$$ is closable, in the sense that if $\{\varphi_k\}_k\subseteq C^1(\RR^m)\cap\LIP_b(\RR^m)$ is a sequence of equi-bounded and equi-Lipschitz  functions converging pointwise to $\varphi\in\LIP_b(\RR^m)$, then $D(\varphi_k)\rightarrow \ell$ in the weak$^*$ topology of $L^\infty(\mu)$, for some $\ell \in L^\infty(\mu)$.
	\end{enumerate}
	If any (hence all) of the items above holds, if $\ell$ is as in item $iii)$ for $\varphi$, then 
\begin{equation}\label{whoell}
	\ell(x)=\nabla_{v(x)}\varphi(x)\,\cdot\,v(x)\qquad\text{for }{\mu}\text{-a.e.\ }x.
\end{equation}
\end{thm}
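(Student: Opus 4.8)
\textbf{Proof strategy for Theorem \ref{AMthm}.}

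The plan is to treat this as a reorganization and packaging of the two cited results \cite[Theorem 1.1]{AM16} and \cite[Theorem 1.1]{ABM23}, rather than a proof from scratch; the work lies in matching the hypotheses and in extracting formula \eqref{whoell}. First I would recall that \cite{AM16} establishes the equivalence $i)\Leftrightarrow ii)$: the decomposability bundle $V(\mu,\cdot)$ is, by its very construction in \cite{AM16}, the maximal (in the $\mu$-a.e.\ sense) Borel assignment of subspaces along which \emph{every} Lipschitz function is differentiable $\mu$-a.e.; so $v(x)\in V(\mu,x)$ $\mu$-a.e.\ is equivalent to the statement that every Lipschitz function is differentiable in direction $v(x)$ for $\mu$-a.e.\ $x$. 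This gives $i)\Leftrightarrow ii)$ directly. The implication $ii)\Rightarrow iii)$ is the easy direction: given an equi-bounded, equi-Lipschitz sequence $\varphi_k\to\varphi$ pointwise, the functions $\nabla\varphi_k\cdot v$ are bounded in $L^\infty(\mu)$ uniformly, hence (along a subsequence) weak$^*$ convergent to some $\ell$; one must check the limit does not depend on the subsequence, and this is exactly where $ii)$ enters — testing against $h\in L^1(\mu)$ and using that $\varphi$ is differentiable in direction $v(x)$ at $\mu$-a.e.\ $x$ one identifies the limit as $\nabla_{v(x)}\varphi(x)\cdot v(x)$, which simultaneously proves \eqref{whoell}.

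The substantive implication is $iii)\Rightarrow i)$ (equivalently $iii)\Rightarrow ii)$), and this is precisely the content of \cite[Theorem 1.1]{ABM23}: closability of the operator $D$ forces $v(x)$ to lie in the decomposability bundle. I would invoke that theorem as a black box. The only care needed is to verify that the formulation here — with $D$ defined on $C^1(\RR^m)\cap\LIP_b(\RR^m)$ and closability phrased via pointwise convergence of equi-bounded, equi-Lipschitz sequences with weak$^*$ convergence of images — coincides with (or is trivially interchangeable with) the formulation in \cite{ABM23}. If \cite{ABM23} uses a slightly different test class (e.g.\ all of $C^1_c$, or $C^\infty$), a routine density/truncation argument bridges the gap: given $\varphi\in C^1\cap\LIP_b$, a standard mollification-plus-cutoff produces an approximating equi-bounded equi-Lipschitz sequence in the smaller class converging pointwise, and conversely restriction is immediate.

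For the final formula \eqref{whoell}, once $i)$ (hence $ii)$) holds, fix $\varphi\in\LIP_b(\RR^m)$ and an approximating sequence $\varphi_k$ as in $iii)$. For $h\in L^1(\mu)$ write
\begin{equation*}
	\int h\,\nabla\varphi_k\cdot v\dd{\mu}=\int h\,\frac{\varphi_k(x+tv(x))-\varphi_k(x)}{t}\dd{\mu}+o_k(1)
\end{equation*}
is not quite legitimate since $\varphi_k$ need not be close to $\varphi$ in $C^1$, so instead I would argue more carefully: by $ii)$, at $\mu$-a.e.\ $x$ the difference quotients $t^{-1}(\varphi(x+tv(x))-\varphi(x))$ converge, as $t\to 0$, to $\nabla_{v(x)}\varphi(x)\cdot v(x)$; combining this with the weak$^*$ convergence $\nabla\varphi_k\cdot v\rightharpoonup^* \ell$ and a diagonal argument controlling both $k$ and $t$ (using equi-Lipschitz bounds for the uniform-in-$k$ estimates on difference quotients, and dominated convergence against $h\in L^1(\mu)$), one identifies $\ell(x)=\nabla_{v(x)}\varphi(x)\cdot v(x)$ $\mu$-a.e. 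The main obstacle is bookkeeping: making the double limit in $k$ and $t$ rigorous and ensuring the identification of $\ell$ is independent of the chosen approximating sequence; but all the analytic input (existence of the directional derivative $\mu$-a.e., the structural characterization of $V(\mu,\cdot)$) is already supplied by \cite{AM16,ABM23}, so no genuinely new estimate is required.
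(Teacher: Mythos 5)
The paper offers no proof of Theorem \ref{AMthm}: it is stated explicitly as a restatement of \cite[Theorem 1.1]{AM16} and \cite[Theorem 1.1]{ABM23}, which is exactly the division of labor you propose ($i)\Leftrightarrow ii)$ from the maximality of the decomposability bundle in \cite{AM16}, the hard implication $iii)\Rightarrow i)$ from \cite{ABM23}), so your approach matches the paper's. One simplification for \eqref{whoell}: closability already forces the weak$^*$ limit to be independent of the chosen approximating sequence (interleave two sequences converging to the same $\varphi$), so you can take the specific sequence provided by Lemma \ref{ottopuntotre} and identify $\ell$ by dominated convergence, avoiding the double limit in $k$ and $t$.
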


In our approximation arguments, we are going to need the following result, that is extracted from \cite[Corollary 8.3]{AM16}.
\begin{lem}\label{ottopuntotre}
Let $\varphi\in \LIP(\RR^m)$ and let $\mu$ be a finite measure on $\RR^m$. Assume also that $x\mapsto v(x)$ is a bounded Borel map such that for $\mu$-a.e.\ $x$, $\varphi$ is differentiable in direction $v(x)$ at $x$. Then there exists a sequence $\{\varphi_k\}_k\subseteq C^1(\RR^m)\cap\LIP(\RR^m)$ such that $\varphi_k\rightarrow \varphi$ uniformly, the global Lipschitz constant of  $\varphi_k$ converges to the global Lipschitz constant of  $\varphi$ and finally $$\nabla_{v(x)}\varphi_k(x)\,\cdot v(x)\rightarrow \nabla_{v(x)}\varphi(x)\,\cdot v(x)\qquad\mu\text{-a.e.\ $x$}.$$
\end{lem}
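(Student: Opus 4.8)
The plan is to obtain the approximants by \emph{mollifying $\varphi$ along the direction field $v$}; the statement about the global Lipschitz constant will be essentially automatic, and the core of the matter is the $\mu$-a.e.\ convergence of directional derivatives. First I would record some harmless reductions. Since the global Lipschitz constant is lower semicontinuous under pointwise convergence, it suffices to produce $\varphi_k\in C^1(\RR^m)\cap\LIP(\RR^m)$ with $\varphi_k\to\varphi$ uniformly, with $\mathrm{Lip}(\varphi_k)\le \mathrm{Lip}(\varphi)+o(1)$, and with $\nabla\varphi_k(x)\,\cdot\,v(x)\to \nabla_{v(x)}\varphi(x)\,\cdot\,v(x)$ for $\mu$-a.e.\ $x$ (note $\nabla_{v(x)}\varphi_k(x)\,\cdot\,v(x)=\nabla\varphi_k(x)\,\cdot\,v(x)$, as $\varphi_k$ is $C^1$). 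By inner regularity of $\mu$ and a diagonal argument (Egorov) we may assume $\mu$ is a compactly supported probability measure; rescaling $v$ we may assume $|v|\le1$, and discarding the $\mu$-null set $\{v=0\}$ we may assume $v\ne0$ on $\mathrm{supp}\,\mu$. The naive isotropic choice $\varphi_k\defeq\varphi*\rho_k$ does \emph{not} work: for $\varphi(x_1,x_2)\defeq\tfrac12\min(|x_1|,|x_2|)\,\mathrm{sign}(x_1)\in\LIP(\RR^2)$, $\varphi$ is differentiable at the origin in direction $e_1$ with $\nabla_{e_1}\varphi(0)\,\cdot\,e_1=0$, whereas $\nabla(\varphi*\rho_k)(0)\,\cdot\,e_1=\tfrac14$ for every $k$ (for the standard radial $\rho_k$), since $\partial_1\varphi=\tfrac12$ on a full wedge around the $x_2$-axis reaching the origin. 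Hence one must mollify mainly along the direction prescribed by $v$.

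The construction I would carry out is the following. Fix $\varepsilon>0$. By Lusin's theorem applied to the bounded Borel map $v$ there is a compact $K$ with $\mu(\RR^m\setminus K)<\varepsilon$ on which $v$ is continuous; subdividing $K$ into finitely many Borel pieces of small diameter and small $v$-oscillation we may assume $v|_K$ takes values in a small ball around a fixed unit vector $e_0$. By Whitney's theorem I extend $v|_K$ to a smooth vector field $\tilde v$ on $\RR^m$ still close to $e_0$ near $K$, and fix a smooth field $x\mapsto R_{\tilde v(x)}\in\mathrm{SO}(m)$ with $R_{\tilde v(x)}e_1=\tilde v(x)/|\tilde v(x)|$. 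For a fixed standard mollifier $\rho$ on $\RR^m$ set
$$
\varphi^{K}_{k}(x)\defeq\int_{\RR^{m}}\varphi\!\left(x-R_{\tilde v(x)}\!\left(\tfrac{1}{k}s_{1},\tfrac{1}{k^{2}}s'\right)\right)\rho(s)\,\dd{s},\qquad s=(s_{1},s')\in\RR\times\RR^{m-1},
$$
so that $\varphi^K_k$ mollifies $\varphi$ widely (scale $1/k$) along $\tilde v$ and narrowly (scale $1/k^2$) transversally. Then $\varphi^K_k\in C^1(\RR^m)$, $\varphi^K_k\to\varphi$ uniformly, and $\mathrm{Lip}(\varphi^K_k)\le\mathrm{Lip}(\varphi)+O_K(1/k)$, the error being controlled by $\|\mathrm D\tilde v\|_\infty$ on a neighbourhood of $K$. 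Finally I would glue the finitely many local approximants, over the pieces of $K$, by a smooth partition of unity, and then let $\varepsilon\downarrow0$ along a diagonal sequence.

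The key computation: for $x\in K$ one has $\tilde v(x)=v(x)$, so $\nabla\varphi^K_k(x)\,\cdot\,v(x)$ splits into two contributions. The first, from the outer $\varphi$-slot, is a weighted average — over longitudinal displacements of size $\lesssim1/k$ — of the one-dimensional difference quotients $\big(\varphi(x+\tau v(x))-\varphi(x)\big)/\tau$ at $\tau=0$, additionally smeared over a transversal ball of radius $\sim1/k^2$; the second, from the $x$-dependence of $R_{\tilde v(x)}$, is $O_K(\mathrm{Lip}(\varphi)/k)$, hence negligible. Since $\varphi$ is differentiable at $x$ in direction $v(x)$, the \emph{purely longitudinal} average converges to $\nabla_{v(x)}\varphi(x)\,\cdot\,v(x)$: this is the one-dimensional fact that, if $g\in\LIP(\RR)$ is differentiable at $0$, then $\int g'(s)\psi_k(s)\,\dd{s}\to g'(0)$ for probability densities $\psi_k$ concentrating at $0$, applied to $r\mapsto\varphi(x+rv(x))$. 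For the gluing, since all the local approximants are uniformly close to $\varphi$ and the partition functions sum to $1$, the terms of $\nabla\varphi_k\,\cdot\,v$ in which a derivative hits a partition function telescope into something uniformly small and do not affect the limit.

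The main obstacle I foresee is precisely the transversal smearing in the first contribution: off the line through $x$ in direction $v(x)$, the (only a.e.\ defined) directional derivative of $\varphi$ in the fixed direction $v(x)$ is uncontrolled, and \emph{no universal transversal rate} makes the smear vanish — already the rate $1/k^{2}$ fails, at the origin, for $\varphi$ with $\partial_1\varphi=\mathbf 1_{\{|x_2|<x_1^{2}\}}$, and thinner cusps defeat any fixed power. One must therefore choose the transversal scale adapted to $\varphi$ and $\mu$ and show that, for $\mu$-a.e.\ $x$, the smear still averages to the correct value; this, together with keeping everything genuinely $C^1$ and the book-keeping of the three scales (longitudinal mollification, transversal mollification, modulus of continuity of $v$ on $K$), is the technical heart of the matter. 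It is carried out in \cite{AM16}; the statement in the form used here is \cite[Corollary 8.3]{AM16}.
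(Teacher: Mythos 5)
The paper does not prove this lemma at all: it is imported verbatim as ``extracted from \cite[Corollary 8.3]{AM16}'' (the internal label even records the numbering of that corollary), so there is no in-paper argument to compare yours against. Your preliminary reductions are correct (lower semicontinuity of the Lipschitz constant under pointwise convergence, the identity $\nabla_{v(x)}\varphi_k(x)\cdot v(x)=\nabla\varphi_k(x)\cdot v(x)$ for $C^1$ functions, discarding $\{v=0\}$), and your counterexample showing that isotropic mollification cannot work is right: for $\varphi=\tfrac12\min(|x_1|,|x_2|)\operatorname{sign}(x_1)$ one has $\partial_1\varphi=\tfrac12\mathbf 1_{\{|x_1|<|x_2|\}}$ a.e., so the mollified gradient at $0$ in direction $e_1$ is $\tfrac14$ while $\nabla_{e_1}\varphi(0)\cdot e_1=0$. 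The anisotropic-mollification strategy you outline is also the natural one.

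However, as a proof the proposal is incomplete, and you say so yourself: the convergence of $\nabla\varphi^K_k(x)\cdot v(x)$ requires controlling the \emph{transversal} smear of the merely a.e.-defined longitudinal derivative of $\varphi$, no fixed transversal rate works (your cusp example makes this point), and choosing the transversal scale adapted to $\varphi$ and $\mu$ so that the smear still averages to $\nabla_{v(x)}\varphi(x)\cdot v(x)$ for $\mu$-a.e.\ $x$ is precisely the content of the lemma. Everything up to that point is soft; the lemma lives or dies on that step, and you close it only by citing \cite{AM16}. So, judged as a self-contained argument, there is a genuine gap at the one place that matters. Judged as a treatment of the statement within this paper, your proposal is on par with the authors': both ultimately delegate the hard analysis to \cite[Corollary 8.3]{AM16}, whose proof is intertwined with the Alberti--Marchese analysis of the decomposability bundle rather than being a short mollification computation.
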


\section{Technical results}\label{secttech}
Recall first our working definition of test vector fields in \eqref{defntest}.
\begin{thm}\label{captangent}
	Let $(\XX,\dist,\mass)$ be an $\RCD(K,N)$ space of essential dimension $n$. Then there exists a partition of $\XX$ made of countably many bounded Borel sets $\{A_k\}_{k}$ such that for every $k$ there exist $n(k)$ with $0\le n(k)\le n$ and $\big\{v_1^k,\dots,v^k_{n(k)}\big\}\subseteq\TestV(\XX)$ with bounded support which is an orthonormal basis of $L^0_\capa(T\XX)$ on $A_k$, in the sense that $$ v_i\,\cdot\,v_j=\delta_i^j\qquad\capa\text{-a.e.\ on }A_k$$ and for every $v\in L^0_\capa(T\XX)$ there exist $g_1,\dots,g_{n(k)}\in L^0(\capa)$ such that 
	$$v=\sum_{i=1}^{n(k)}g_i v_i^k\qquad\capa\text{-a.e.\ on }A_k,$$
	where, in particular,  $$ g_i=v\,\cdot\,v_i^k\qquad\capa\text{-a.e.\ on }A_k.$$
	
	Here we implicitly state that if $n(k)=0$ then for every $v\in L^0_\capa(T\XX)$ we have $v=0\ \capa$-a.e.\ on $A_k$.
\end{thm}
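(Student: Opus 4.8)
The goal is to produce a countable Borel partition $\{A_k\}$ of $\XX$ on each piece of which the capacitary tangent module $L^0_\capa(T\XX)$ admits a global orthonormal frame made of test vector fields (with bounded support). The plan is to combine three ingredients: (i) the existence of an $L^0_\mass$-orthonormal frame of $L^0(T\XX)$ on Borel pieces — this follows from the finite essential dimension $n$ of the $\RCD(K,N)$ space, since $L^0(T\XX)$ is a module of bounded rank and one can locally diagonalize any generating set via a measurable Gram–Schmidt; (ii) the density of $\TestV(\XX)$ in $L^2(T\XX)$ (hence, after truncation and localization, one can approximate any element of $L^0(T\XX)$ by test vector fields in the appropriate topology); (iii) the identification of the capacitary tangent module $L^0_\capa(T\XX)$ with $L^0(T\XX)$ through the quasi-continuous representative map, so that statements about $\capa$-a.e.\ equalities can be deduced from $\mass$-a.e.\ ones combined with quasi-continuity.

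Concretely, I would first invoke the structure theory of $\RCD(K,N)$ spaces to get a Borel partition of $\XX$ into sets $A'_j$, each of finite measure and bounded, on which $L^0(T\XX)$ is free of some rank $n(j)\le n$, with an $L^2$-orthonormal basis $\{e_1^j,\dots,e^j_{n(j)}\}$. Next, fixing $j$, I approximate each $e_i^j$ in the $L^2(T\XX)$-norm by test vector fields; by a measurable selection / exhaustion argument one finds test vector fields $w_1,\dots,w_{n(j)}$ (with bounded support, using good cut-off functions from \cite{Mondino-Naber14}) such that on a Borel subset $A'_j\cap G$ of almost full measure the Gram matrix $(w_i\cdot w_l)$ is uniformly close to the identity, hence invertible and uniformly positive definite. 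Applying a pointwise (measurable) Gram–Schmidt orthonormalization to $w_1,\dots,w_{n(j)}$ on that subset — the orthonormalization coefficients are Borel functions of the entries $w_i\cdot w_l$, which are themselves quasi-continuous — produces an orthonormal frame $v_1^k,\dots,v^k_{n(k)}$ whose entries are $L^\infty$ combinations of test vector fields, hence still test vector fields by the module structure in \eqref{defntest}. Iterating this over a countable exhaustion of each $A'_j$ (at each stage removing the good subset and repeating on the remainder, which has strictly smaller measure) yields the desired countable partition; on pieces where $n(j)=0$ the tangent module is trivial and there is nothing to orthonormalize.

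The decomposition $v=\sum_i g_i v_i^k$ with $g_i = v\cdot v_i^k$ is then immediate from orthonormality once one knows $\{v_i^k\}$ generates $L^0_\capa(T\XX)$ on $A_k$: pairing $v$ with each $v_i^k$ gives the coefficients, and the fact that these coefficients reconstruct $v$ is exactly the statement that the frame spans, which is inherited from the spanning property of $\{e_i^j\}$ in $L^0(T\XX)$ transported through the capacitary identification. The main obstacle I anticipate is the passage from $L^0(T\XX)$ (an $\mass$-based object) to $L^0_\capa(T\XX)$ (a $\capa$-based object): one must check that a frame which is orthonormal and spanning $\mass$-a.e.\ remains so $\capa$-a.e., which requires that the quasi-continuous representatives of the test vector fields genuinely capture the capacitary module structure, and that the measurable Gram–Schmidt coefficients — a priori only $\mass$-measurable Borel functions of quasi-continuous inputs — can be taken quasi-continuous so that the resulting frame lives in $L^0_\capa(T\XX)$ and not merely in $L^0(T\XX)$. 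This is where one leans on the identification results for capacitary modules in \cite{debin2019quasicontinuous} and \cite[Section 1.3]{bru2019rectifiability}, together with the fact that $|\DIFF F|\ll\capa$ so that all the resulting identities are meaningful $|\DIFF F|$-a.e.\ as needed in the later sections.
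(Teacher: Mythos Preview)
Your proposal has a genuine gap precisely at the point you flag as ``the main obstacle'': the passage from $\mass$-a.e.\ to $\capa$-a.e. Your exhaustion argument (remove a good subset of almost full $\mass$-measure, repeat on the remainder) covers $\XX$ only up to an $\mass$-null set. But $\mass$-null sets can have positive capacity --- think of a hyperplane in $\RR^n$ --- and on such sets $L^0_\capa(T\XX)$ is typically nontrivial, so the theorem demands a frame there as well. Your scheme produces none. The $L^2(T\XX)$-approximation of the $e_i^j$ by test fields $w_i$ controls only the $\mass$-measure of the bad set $\{(w_i\cdot w_l)\not\approx\delta_{il}\}$, not its capacity; nothing forces the iterated remainders to have capacity tending to zero. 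Relatedly, the spanning property of $\{e_i^j\}$ is an $\mass$-a.e.\ statement with no obvious $\capa$-a.e.\ upgrade, since the $e_i^j$ themselves are only defined $\mass$-a.e.

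The paper avoids this by never leaving the capacitary module. It fixes a countable generating family $v_k=\tilde\nabla f_k$ and forms the quasi-continuous Gram determinants $F_I=\det(v_i\cdot v_j)_{i,j\in I}$; the partition is then $\{F_I\ne 0\}\cap\bigcap_{|J|>|I|}\{F_J=0\}$, which is well-defined up to $\capa$-null sets because each $F_I$ is quasi-continuous. The only $\mass$-to-$\capa$ step is the observation that $|I|>n$ forces $F_I=0$ $\mass$-a.e.\ (by the dimension bound on $L^2(T\XX)$), hence $F_I=0$ $\capa$-a.e.\ by quasi-continuity --- and this is the kind of transfer that is legitimate (a quasi-continuous function vanishing $\mass$-a.e.\ vanishes $\capa$-a.e.), unlike the measure-exhaustion you attempt. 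Spanning and independence on each piece are then pure linear algebra with the $F_I$. A smaller issue in your sketch: by \eqref{defntest} the coefficients multiplying $\nabla g_i$ must lie in $S^2(\XX)\cap L^\infty(\mass)$, not merely $L^\infty$; the paper arranges this by writing the Gram--Schmidt coefficients as polynomials in the inner products $v_i'\cdot v_j'$ (clearing denominators rather than dividing), and only at the final normalization step dividing by $\epsilon\vee|v_k'|$, which is a Lipschitz function of a Sobolev quantity and hence still in $S^2\cap L^\infty$.
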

\begin{proof}
	First, we recall that in \cite{debin2019quasicontinuous} the capacitary tangent module was defined as follows.
	There exists a unique (in a suitable sense) couple $(L^0_\capa(T\XX),\tilde\nabla)$, where $L^0_\capa(T\XX)$ is a $L^0(\capa)$-normed $L^0(\capa)$-module and $\tilde\nabla:	\TestF(\XX) \rightarrow L^0_\capa(T\XX)$ is a linear operator such that:
	\begin{enumerate}[label=\roman*)]
		\item
		$|{\tilde\nabla f}|=(\abs{\nabla f}) \ \capa$-a.e.\ for every $f\in\TestF(\XX)$, where at the right hand side we take the quasi-continuous representative (\cite{debin2019quasicontinuous}),
		\item
		the set $\left\{\sum_{n} \chi_{E_n}\tilde \nabla f_n\right\}$, where $\{f_n\}_n\subseteq\TestF(\XX)$ and $\{E_n\}_n$ is a Borel partition of $\XX$ is dense in $L^0_\capa(T\XX)$.
	\end{enumerate}
	It is easy to show that in item $ii)$ above we can replace $\TestF(\XX)$ with a countable subset, say $\{f_k\}_{k\in\mathbb N}$. 
This is due to the fact that $H^{1,2}_C(T\XX)$ is separable, so that we can take a countable dense subset of $\TestV(\XX)$ in the $W^{1,2}_C(T\XX)$ topology. Set now $v_k\defeq\tilde\nabla{f_k}$ and $D\defeq\{v_k\}_k$.

Consider now the (countable) sequence of $\capa$-a.e.\ defined functions $$F_I\defeq\det\left(v_i\,\cdot\,v_j\right)_{i,j\in I}$$
where $I$ ranges over all finite subsets of $\mathbb N$.
Notice that if $\abs{I}> n$ then the fact that $L^2(T\XX)$ has dimension $n$ and basic linear algebra yield that $F_I=0\ \mass$-a.e.\ hence $F_I=0\ \capa$-a.e.\ because $F_I$ is quasi-continuous.
We set then for $i\in\mathbb N$, $i\ge 1$ $$A_i\defeq\bigcup_{\abs{I}=i}\left\{F_I\ne 0\right\}\bigcap_{\abs{J}\ge i+1}\left\{ F_J=0\right\} $$
and $A_0\defeq\XX\setminus\cup_{i\ge 1}A_i$.
Notice $\XX=A_0\cup\cdots\cup A_n$ as a disjoint union. 

Notice now that we can show, by density, that for every $v\in L^0_\capa(T\XX)$ we have $v=0\ \capa$-a.e.\ on $A_0$.
Then, by definition, we can write $A_i=\bigcup_I A_i^I$ countable (disjoint) union, where on $A_i^I$ we have that $F_I\ne 0\ \capa$-a.e.\ and $F_J=0\ \capa$-a.e.\ if $\abs{J}>\abs{I}$.
We claim now that $\{v_k\}_{k\in I}$ is a basis of $ L^0_\capa(T\XX)$ on $A_i^I$, in the sense that for every $v\in L^0_\capa(T\XX)$ there exists $\{g_k\}_{k\in I}$ such that $$v=\sum_{k\in I} g_k v_k\quad\capa\text{-a.e.\ on } A_i^I$$ and that if for some $\{g_k\}_{k\in I}\subseteq L^0(\capa)$ we have that $\sum_{k\in I}g_k v_k=0\ \capa$-a.e.\ then $g_k=0\ \capa$-a.e.\ on $A_i^I$ for every $k\in I$. 

The second claim follows by basic linear algebra: indeed if $\sum_{k\in I}g_k v_k=0\ \capa$-a.e.\ then we have in particular $$\sum_{k,h\in I} g_h g_k v_h\,\cdot\, v_k=0\quad \capa\text{-a.e.}$$ and this implies $ g_k=0\ \capa$-a.e.\ on $A^I_i$ for every $k\in I$ as $F_I\ne 0\ \capa$-a.e.\ on $A^I_i$.

We show now the first claim. This is again basic linear algebra together with a simple density argument. Take any $w$ in $L^0_\capa(T\XX)$, then a  density-continuity argument and the fact that $F_{I\cup \{{k}\}}=0\ \capa$-a.e.\ on $A_i^I$ for every $k$ show that if we set (with an abuse) $v_{\bar{k}}\defeq w$ we have that $$F_{I\cup\{\bar{k}\}}=0\quad\capa\text{-a.e.\ on }A_i^I.$$ In particular, as $F_I\ne 0\ \capa$-a.e.\ on $A_i^I$, we have that $v_{\bar{k}}\,\cdot v_l=\sum_{j\in I} g_j v_j \,\cdot\, v_l$ $\capa$-a.e.\ on $A_i^I$ for $l\in\ I\cup\{\bar{k}\}$ where $\{g_j\}_j\subseteq L^0(\capa)$ (as they are the unique solution of a linear system with coefficients in $ L^0(\capa)$). This immediately implies $$\bigg|v_{\bar{k}}-\sum_{j\in I} g_j v_j \bigg| ^2=0\quad\capa\text{-a.e.\ on } A_i^I.$$

We do now a further decomposition of the sets $A^I_i$.
First, we orthogonalize the basis $\{v_k\}_{k\in I}$ by means of a Gram-Schmidt procedure as follows. Assume for simplicity $I=\{1,2,\dots,m\}$. We set recursively
$$v_k'\defeq c_k^k v_k+ \sum_{l=1}^{k-1} c_l^k v'_l\quad\text{for }k=1,\dots,m,$$
where $\{c_l^k\}_{1\le l\le k\le m}$ are defined as 
\begin{equation}\notag
	c_l^k\defeq
	\begin{dcases}
		\prod_{j=1}^{k-1} v'_j\,\cdot\,v'_j\quad&\text{if $l=k$},\\
		-\frac{v_k\,\cdot\,v'_l}{v'_l\,\cdot\,v'_l} \prod_{j=1}^{k-1} v'_j\,\cdot\,v'_j\quad&\text{if $l<k$.}
	\end{dcases}
\end{equation}
Notice that $\{c_l^k\}\subseteq S^2(\XX)\cap L^\infty(\mass)$ and that $\{v'_k\}_{k\in I}$ is still a basis on $A_i^I$ in the sense described above. Also, $v_h'\,\cdot\,v_k'=0\ \capa$-a.e.\ on $A_i^I$ if $h\ne k, h,k\in I$.

If $\epsilon>0$, we set then $(A^I_i)_\epsilon\defeq A^I_i\cap \left\{\abs{v'_k}>\epsilon\ \text{for every }k\in I\right\}$, notice that $A_i^I=\bigcup_{\epsilon>0}(A^I_i)_\epsilon$ and we can write such union as a countable union. We rescale now the basis writing 
$$v''_k\defeq\frac{1}{\epsilon\vee \abs{v'_k}}v'_k$$
and this allows us to conclude the proof.
\end{proof}

In this paper if $\nu=(\nu_1,\dots,\nu_m)\in L^0_\capa(T\XX)^m$ and $v\in L^0_\capa(T\XX)$, we write 
$$
\nu\,\cdot\,v\defeq(\nu_i\,\cdot\, v)_i\in\RR^m.
$$

\begin{lem}\label{essspan}
	Let $(\XX,\dist,\mass)$ be an $\RCD(K,N)$ space, $\mu\ll\capa$ a finite Borel measure and $\nu\in L^0_\capa(T\XX)^m$. Then there exists unique (up to $\mu$-a.e.\ equality)  $\mu$-measurable map
	$$G:\XX\rightarrow\Gr(\RR^m)$$
	satisfying
	\begin{enumerate}[label=\roman*)]
		\item for every $v\in L^0_\capa(T\XX)$, $$ \nu \,\cdot\,v\in G\qquad\text{$\mu$-a.e.}$$
		\item if $G':\XX\rightarrow\Gr(\RR^m)$ is another map satisfying the requirement $i)$, then
		$$ G\subseteq G'\qquad\text{$\mu$-a.e.}$$
	\end{enumerate}
\end{lem}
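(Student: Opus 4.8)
The plan is to construct $G$ pointwise by taking, for $\mu$-a.e.\ $x$, the span of the (at most $n$) vectors $\nu(x)\cdot v_i^k(x)\in\RR^m$ coming from the orthonormal basis provided by Theorem~\ref{captangent}. More precisely, fix the partition $\{A_k\}_k$ of $\XX$ and the test vector fields $\{v_1^k,\dots,v_{n(k)}^k\}$ as in that theorem. On $A_k$ define
\begin{equation*}
	G(x)\defeq \mathrm{span}\bigl\{\nu(x)\cdot v_1^k(x),\dots,\nu(x)\cdot v_{n(k)}^k(x)\bigr\}\in\Gr(\RR^m),
\end{equation*}
with the convention that $G(x)=\{0\}$ on $A_0$ (where $n(k)=0$). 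Since each $v_i^k$ is a quasi-continuous vector field and $\nu\in L^0_\capa(T\XX)^m$, the maps $x\mapsto \nu(x)\cdot v_i^k(x)$ are $\capa$-measurable, hence $\mu$-measurable; and the map sending a finite tuple of vectors in $\RR^m$ to their span is Borel from $(\RR^m)^{n}$ into $\Gr(\RR^m)$ (e.g.\ stratify by the rank, which is lower semicontinuous, and on each rank stratum the span depends continuously on the tuple). Composing, $G$ is $\mu$-measurable. This settles the construction.

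Next I would verify property $i)$. Let $v\in L^0_\capa(T\XX)$. On each $A_k$, Theorem~\ref{captangent} gives $g_1,\dots,g_{n(k)}\in L^0(\capa)$ with $v=\sum_{i=1}^{n(k)} g_i v_i^k$ $\capa$-a.e.\ on $A_k$, hence $\mu$-a.e.\ on $A_k$ since $\mu\ll\capa$. Then for each component $\nu_j$ we get $\nu_j\cdot v=\sum_i g_i\,(\nu_j\cdot v_i^k)$ $\mu$-a.e., so
\begin{equation*}
	\nu\cdot v=\sum_{i=1}^{n(k)} g_i\,\bigl(\nu\cdot v_i^k\bigr)\in \mathrm{span}\bigl\{\nu\cdot v_1^k,\dots,\nu\cdot v_{n(k)}^k\bigr\}=G\qquad\mu\text{-a.e.\ on }A_k,
\end{equation*}
and summing over $k$ gives $\nu\cdot v\in G$ $\mu$-a.e.\ on $\XX$.

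For the minimality property $ii)$, suppose $G'$ also satisfies $i)$. The point is that the vectors $\nu\cdot v_i^k$ that generate $G$ on $A_k$ are themselves of the form $\nu\cdot v$ for admissible $v\in L^0_\capa(T\XX)$ (namely $v=v_i^k$, which lies in $\TestV(\XX)\subseteq L^0_\capa(T\XX)$). Hence by $i)$ applied to $G'$, for each $i$ we have $\nu\cdot v_i^k\in G'$ $\mu$-a.e.\ on $A_k$; intersecting the finitely many full-measure sets (over $i=1,\dots,n(k)$) and using that $G'(x)$ is a linear subspace, we get $\mathrm{span}\{\nu\cdot v_1^k,\dots,\nu\cdot v_{n(k)}^k\}=G\subseteq G'$ $\mu$-a.e.\ on $A_k$. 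Summing over the countably many $A_k$ yields $G\subseteq G'$ $\mu$-a.e.\ on $\XX$. Uniqueness up to $\mu$-a.e.\ equality then follows by applying $ii)$ symmetrically to two maps both satisfying $i)$ and $ii)$. The main obstacle I anticipate is the measurability bookkeeping: one must be careful that the pointwise identifications used (quasi-continuous representatives of $\nu$ and of the $v_i^k$, the trace on the $\capa$-tangent module, and the passage from $\capa$-a.e.\ to $\mu$-a.e.\ via $\mu\ll\capa$) are all compatible, and that $x\mapsto\mathrm{span}(\cdots)$ is genuinely Borel into $\Gr(\RR^m)$ — but this is a standard rank-stratification argument and not a deep difficulty.
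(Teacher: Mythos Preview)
Your proposal is correct and follows essentially the same approach as the paper: define $G$ on each piece $A_k$ of the partition from Theorem~\ref{captangent} as the span of the vectors $\nu\cdot v_i^k$, check $\mu$-measurability, deduce item $i)$ from the basis property and item $ii)$ from applying $i)$ to the generators $v_i^k$. You supply slightly more detail on the Borel measurability of the span map (rank stratification), but otherwise the arguments are the same.
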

We call the map $G$ given by the lemma above $\mu-\essspan\nu$.
\begin{proof}
	First notice that uniqueness of $G$ trivially follows from item $ii)$.
	
	Fix for the moment a set $A$ as in the decomposition given by Theorem \ref{captangent} and an orthonormal basis of $L^0_\capa(T\XX)$ on $A$, say $\{v_1,\dots,v_k\}$. The map $$A\ni x\mapsto \langle \left\{\nu  \,\cdot \,v_i(x)\right\}_{i=1,\dots,k}\rangle\in\Gr(\RR^m)$$
	is $\mu$-measurable. We then define $G$ equals this map on $A$ and then define $G$ $\mu$-a.e.\ on $\XX$ with a gluing argument. 
	
	We show now that $G$ satisfies the desired properties. It is sufficient to fix a set $A$ and vector fields $\{v_1,\dots,v_k\}$ as above and prove the claims on $A$. Item $i)$ follows from the fact that $\{v_1,\dots,v_k\}$ is a basis of $L^0_\capa(T\XX)$ on $A$. For what concerns item $ii)$, take $G'$ satisfying item $i)$. In particular, $\mu$-a.e.\ $\nu  \,\cdot \,v_i\in G$ for every $i=1,\dots,k$ so that $\mu$-a.e.\ $G\subseteq G'$.
\end{proof}

In the introduction we have seen that we need to work with functions of bounded variation that have a Borel inverse. On the Euclidean space it was easy to reduce ourselves to this situation, by adding the identity map after the function. On $\RCD(K,N)$ spaces the situation is slightly more complicated as we do not have a global, smooth chart. We solve this issue partitioning $\XX$, up to a negligible set, in subsets on which we have a reasonable notion of \say{chart} and this is the content of the following lemma.
\begin{prop}\label{charts}
	Let $(\XX,\dist,\mass)$ be an $\RCD(K,N)$ space of essential dimension $n$ and let $F\in\BV(\XX)^m$. Then there exists a countable collection $\{(G_i,\Psi_i,B_i)\}_i$  such that 
	\begin{enumerate}[label=\roman*)]
		\item $\{G_i\}_i$ is a collection of pairwise disjoint Borel subset of $\XX$ satisfying $$|\DIFF F|\bigg(\XX\setminus\bigcup_i G_i\bigg)=0,$$
		\item $\{B_i\}_i$ is a collection of Borel subsets of $\RR^{n}$,
	\item for every $i$, $$\Psi_i:G_i\rightarrow B_i$$
	is an invertible Borel map with Borel inverse
	$$\Psi_i^{-1}:B_i\rightarrow G_i,$$ 
	\item for every $i$, $\Psi_i$ is the restriction of some $\tilde{\Psi}_i\in\BV(\XX)^n$ to $G_i$ with $J_{\tilde{\Psi}_i}\cap G_i=\emptyset$.
	\end{enumerate}
\end{prop}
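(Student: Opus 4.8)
The plan is to realise each $\Psi_i$ as the restriction of a suitably cut-off harmonic $\delta$-splitting map. Such maps are Lipschitz with bounded support, hence belong to $\BV(\XX)^n$, and, being continuous, have empty jump set; so item iv) comes essentially for free once the construction is set up, and the content of the statement reduces to producing a Borel partition of $|\DIFF F|$-almost all of $\XX$ into pieces on which one such map is injective, which will give items i)--iii).

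First I would invoke the rectifiability of finite dimensional $\RCD$ spaces (\cite{Mondino-Naber14} and its refinements): if $(\XX,\dist,\mass)$ has essential dimension $n$, there are countably many pairwise disjoint Borel sets $U_j\subseteq\XX$ with $\mass\big(\XX\setminus\bigcup_j U_j\big)=0$ and, for each $j$, a harmonic $\delta_j$-splitting map $u_j:B_{r_j}(x_j)\to\RR^n$, with $B_{r_j}(x_j)\supseteq U_j$, which is $(1+\epsilon_j)$-bi-Lipschitz from $U_j$ onto $u_j(U_j)\subseteq\RR^n$. Since harmonic functions on an $\RCD(K,N)$ space are locally Lipschitz, multiplying $u_j$ by a good cut-off function (\cite{Mondino-Naber14}) supported in a ball where $u_j$ is Lipschitz and identically $1$ on a neighbourhood of $U_j$ yields $\tilde\Psi_j\in\LIP(\XX)^n$ with bounded support and $\tilde\Psi_j=u_j$ near $U_j$. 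As $N<\infty$ forces bounded sets to have finite $\mass$, a bounded-support Lipschitz map has finite total variation and hence lies in $\BV(\XX)^n$; and $\tilde\Psi_j$ being continuous, $J_{\tilde\Psi_j}=\emptyset$. Taking $G_j\subseteq U_j$ Borel where $\tilde\Psi_j$ realises the bi-Lipschitz bound (so, in particular, is injective), disjointifying the $G_j$, and setting $\Psi_j\defeq\tilde\Psi_j|_{G_j}$ and $B_j\defeq\Psi_j(G_j)$, the map $\Psi_j$ is a bi-Lipschitz bijection onto $B_j$; by the Lusin--Souslin theorem $B_j$ is Borel and $\Psi_j^{-1}$ is Borel (in fact Lipschitz), which settles ii), iii) and iv).

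The remaining, and genuinely delicate, point is i): the $U_j$ above only exhaust $\mass$-almost all of $\XX$, while $|\DIFF F|$ need not be absolutely continuous with respect to $\mass$. To fix this I would appeal to the structure theory of $\BV$ functions and sets of finite perimeter on $\RCD$ spaces (\cite{ambrosio2018rigidity,bru2019rectifiability,BGBV} and the references therein): $|\DIFF F|\ll\capa$, and at $|\DIFF F|$-a.e.\ point $\XX$ admits a Euclidean tangent $\RR^k$ for some $1\le k\le n$, along which $|\DIFF F|$ blows up to a codimension-one measure; consequently the rectifiable decomposition can be carried out adapted to $|\DIFF F|$ instead of to $\mass$, so that in addition $|\DIFF F|\big(\XX\setminus\bigcup_j G_j\big)=0$ --- on the pieces where $k<n$ one takes values in $\RR^k\subseteq\RR^n$ and uses Theorem \ref{captangent} to pick out the corresponding capacitary frame. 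Proving this $|\DIFF F|$-full bi-Lipschitz covering is where the real difficulty sits, and I would deduce it from the cited results rather than redo the blow-up analysis here.
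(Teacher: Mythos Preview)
Your argument handles the absolutely continuous part of $|\DIFF F|$ correctly: the Mondino--Naber bi-Lipschitz charts cover $\mass$-a.e.\ of $\XX$, hence $|\DIFF F|^{ac}$-a.e., and cut-off $\delta$-splitting maps work exactly as you describe. The paper does the same there, and likewise cites \cite{bru2019rectifiability} directly for the jump part, which is $(n-1)$-rectifiable.

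The genuine gap is the Cantor part $|\DIFF F|^C$. You concede this is ``where the real difficulty sits'' but then defer to the cited structure theory; none of those references actually gives a bi-Lipschitz covering of $|\DIFF F|^C$-a.e.\ of $\XX$. The Mondino--Naber charts only exhaust $\mass$, the rectifiability in \cite{bru2019rectifiability} is about reduced boundaries (hence the jump part), and the existence of Euclidean tangents at $|\DIFF F|$-a.e.\ point does not by itself produce bi-Lipschitz charts. Your remark that $|\DIFF F|$ ``blows up to a codimension-one measure'' is false off $J_F$, and the appeal to Theorem~\ref{captangent} is a red herring: that theorem produces a capacitary orthonormal frame, not a chart into $\RR^n$.

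The paper's device for the Cantor part is a \emph{subgraph trick}. One lifts to $\GG_f=\{(x,t):t<f(x)\}\subseteq\XX\times\RR$, which has locally finite perimeter with $|\DIFF f|\le\pi_*|\DIFF\chi_{\GG_f}|$; then the rectifiability of $\partial^*\GG_f$ in the $(n{+}1)$-dimensional product (again \cite{bru2019rectifiability}) yields bi-Lipschitz pieces $\Phi_i:\hat C_i\to\RR^n$ of the boundary, and one sets $\Psi_i\defeq\Phi_i\circ(\Id,\bar f)$ on $C_i\defeq\pi(\hat C_i)$. Note that the resulting extension $\tilde\Psi_i=\tilde\Phi_i\circ(\Id,f)$ is only in $\BV(\XX)^n$, not Lipschitz, so item iv) is \emph{not} free in this region: one checks $|\DIFF\tilde\Psi_i|\le L(|\DIFF f|+\mass)$ and removes the $|\DIFF f|$-null set $J_{\tilde\Psi_i}\cap C_i$. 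This construction is precisely the missing ingredient in your proposal.
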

\begin{proof}
As $|\DIFF F|\le |\DIFF F_1|+\cdots+|\DIFF F_m|$, there is no loss of generality in assuming $m=1$, so write $f=F_1$.  

Split $\XX$ in the disjoint union $A_f\cup C_f\cup J_f$, according to the decomposition $$ |\DIFF f|=|\DIFF f|^{ac}+|\DIFF f|^{C}+|\DIFF f|^j$$
in absolutely continuous, Cantor and jump part. Then the claim on the portion $J_f$ follows from the rectifiability result of \cite{bru2019rectifiability}. The claim on the portion $A_f$ follows from the rectifiability of $(\XX,\dist,\mass)$, e.g.\ \cite{Mondino-Naber14} or \cite{BruPasSem20}. As the maps providing rectifiability are bi-Lipschitz, an application of McShane extension Theorem shows that item $iv)$ can be satisfied.

We treat now the part $C_f$. First, we define the subgraph of $f$ as $$\GG_f\defeq\{(x,t)\in\XX\times\RR:t<f(x)\}.$$
By \cite[Theorem 5.1]{Ambrosio-Pinamonti-Speight15} we obtain that $\GG_f$ is a set of locally finite perimeter and that, if $\pi:\XX\times\RR\rightarrow\XX$ denotes the projection onto the first factor, it holds 
$$|\DIFF f|\le \pi_*|\DIFF\chi_{\GG_f}|.$$
By the rectifiability result of \cite{bru2019rectifiability} again, there exists a countable collection $\{\hat{C_i}\}_i$ of pairwise disjoint Borel subsets of $\partial^*\GG_f\subseteq\XX\times\RR$ such that $$|\DIFF\chi_{\GG_f}|\bigg((\XX\times\RR)\setminus \bigcup_i\hat{C}_i\bigg)=0$$
and for every $i$ there exists a map 
$$
\Phi_i: \hat{C_i}\rightarrow \hat{B}_i\subseteq\RR^n
$$
which is bi-Lipschitz onto its image. To our aim, there is no loss of generality in assuming that for every $i$, $\hat{C_i}\subseteq (C_f\cap\left\{f\in\RR\right\})\times\RR$ (we use also \cite[Lemma 3.2]{kinkorshatuo}). 
We recall that (see e.g.\ \cite[Lemma 2.11]{ABPrank}) $$(x,t)\in\partial^*\GG_f\quad\Rightarrow\quad t\in[f^{\wedge}(x,),f^{\vee}(x)].$$
We set now for every $i$, $C_i\defeq\pi(\hat{C}_i)$ and $\Psi_i\defeq \Phi_i\circ (\Id, f)_{|C_i}$, and it is easy to show that this assignment satisfies the request in item $iii)$.  

We now show item $iv)$, up to removing from $C_i$ the $|\DIFF f|$-negligible subset $J_{\tilde\Psi_i}\cap C_i$ (the fact that $J_{\tilde\Psi_i}\cap C_i$ is $|\DIFF f|$-negligible follows from \eqref{123r31} below and the fact that $|\DIFF f|\mres(\XX\setminus J_f)$ does not charge jump sets of functions of bounded variation). We first use McShane extension Theorem for $\Phi_i$ to obtain an $L$-Lipschitz function $\tilde{\Phi}_i$ and set $\tilde{\Psi}_i\defeq\tilde{\Phi}_i\circ(\Id,f)$. Notice that if $g\in\LIP_{loc}(\XX)$ it holds that 
$\lip(\tilde{\Phi}_i\circ(\Id,g))\le L(\lip(g)+1)$, therefore an approximation argument yields that $\DIFF \tilde{\Psi}_i\in\BV_{loc}(\XX)^n$ with \begin{equation}\label{123r31}
	|\DIFF \tilde{\Psi}_i|\le L(|\DIFF f|+\mass)
\end{equation}
and then the conclusion follows.
\end{proof}

\section{Main result}
\begin{lem}\label{mainlemma}
Let $(\XX,\dist,\mass)$ be an $\RCD(K,N)$ space of essential dimension $n$, let $F\in\BV_{loc}(\XX)^m$, let $\varphi\in \LIP(\RR^m)$  and let $v\in\TestV(\XX)$. Then for $|\DIFF F|$-a.e.\ $x\notin J_F$, $\varphi$ is differentiable at $F(x)$ in direction $(\nu_F \,\cdot \,v)(x)\in\RR^m$ and it holds

\begin{equation}\label{maineq}
	\nu_{\varphi\circ F} \,\cdot \,v|\DIFF(\varphi\circ F)|\mres (\XX\setminus J_F)=\nabla_{\nu_F\,\cdot\, v} \varphi(F) (\nu_F \,\cdot \, v){|{\DIFF F}|}\mres (\XX\setminus J_F).
\end{equation}

\end{lem}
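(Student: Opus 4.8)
\emph{Reductions.} We follow, in the metric setting, the strategy of the introduction. After a standard localization we may assume $F\in\BV(\XX)^m$, and by a truncation that $\varphi\in\LIP_b(\RR^m)$ (replacing $\varphi$ by a bounded Lipschitz function agreeing with it on a ball containing the image of $F$ affects neither the differentiability of $\varphi$ at points of that image nor $|\DIFF(\varphi\circ F)|\mres(\XX\setminus J_F)$, by locality). We then apply Proposition~\ref{charts} to $F$, obtaining the triples $\{(G_i,\Psi_i,B_i)\}_i$ with $\tilde\Psi_i\in\BV(\XX)^n$, $\Psi_i=\tilde\Psi_i|_{G_i}$ and $J_{\tilde\Psi_i}\cap G_i=\emptyset$. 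Since $|\DIFF F|(\XX\setminus\bigcup_iG_i)=0$, $|\DIFF(\varphi\circ F)|\ll|\DIFF F|$ and $J_{\varphi\circ F}\subseteq J_F$, it suffices to prove \eqref{maineq} with a compact set $K\subseteq G_i\cap(\XX\setminus J_F)$ in place of $\XX\setminus J_F$, for every $i$ and every such $K$, and then pass to the countable union over $i$ and over a compact exhaustion. Fixing $i$ and $K$, we set $\hat F\defeq(F,\tilde\Psi_i)\in\BV(\XX)^{m+n}$ and $\hat\varphi\in\LIP_b(\RR^{m+n})$, $\hat\varphi(x,z)\defeq\varphi(x)$, so that $\hat\varphi\circ\hat F=\varphi\circ F$; note that $\hat F|_{G_i}$ is injective with Borel inverse onto its (Borel) image, because $\Psi_i$ is, and that $K\cap J_{\hat F}=K\cap(J_F\cup J_{\tilde\Psi_i})=\emptyset$. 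This replaces, in the absence of a global smooth chart, the trick ``replace $F$ by $(F,\Id)$'' available on $\RR^n$.

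\emph{The differentiation operator.} On $\RR^{m+n}$ we consider the finite measure $\mu\defeq\hat F_*(|\DIFF\hat F|\mres K)$, the bounded Borel map $w\defeq(\nu_{\hat F}\,\cdot\,v)\circ\hat F^{-1}\colon\hat F(K)\to\RR^{m+n}$ (with the convention $\nu_{\hat F}\,\cdot\,v=(\nu_{\hat F,j}\,\cdot\,v)_j$; it is bounded because $|\nu_{\hat F}|=1$ $|\DIFF\hat F|$-a.e.\ and $v\in L^\infty_\capa$), and the operator $D(\psi)\defeq\nabla\psi\,\cdot\,w$ on $C^1(\RR^{m+n})\cap\LIP_b(\RR^{m+n})$. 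The core of the argument is to check that $D$ enjoys the closability property of item $iii)$ in Theorem~\ref{AMthm}; granting this, Theorem~\ref{AMthm} shows that \emph{every} Lipschitz function on $\RR^{m+n}$ is differentiable in direction $w(\zeta)$ for $\mu$-a.e.\ $\zeta$, and this is what we read back to $\XX$.

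\emph{Closability.} Given $h$ bounded Borel and $\psi_k\to\psi$ pointwise with $\{\psi_k\}\subseteq C^1\cap\LIP_b$ equi-bounded and with Lipschitz constants $\le L$, we run the computation of the introduction: changing variables and invoking the chain rule for $C^1\cap\LIP$ functions recalled at the end of Section~\ref{sectprel} (valid on $\XX\setminus J_{\hat F}$, which contains $K$),
\[
\int h\,D(\psi_k)\,\dd{\mu}=\int_K(h\circ\hat F)\,\nabla\psi_k(\hat F)\,\cdot\,(\nu_{\hat F}\,\cdot\,v)\,\dd{|\DIFF\hat F|}=\int_K(h\circ\hat F)\,\nu_{\psi_k\circ\hat F}\,\cdot\,v\,\dd{|\DIFF(\psi_k\circ\hat F)|}.
\]
Since $|\DIFF(\psi_k\circ\hat F)|\le L\,|\DIFF\hat F|$ uniformly in $k$, we approximate the merely bounded Borel vector field $\chi_K(h\circ\hat F)\,v$ in $L^1(|\DIFF\hat F|)$ by compactly supported test vector fields $\xi^j\in\TestV(\XX)$; for these, \eqref{intbyparts} (in its $\BV_{loc}$ form) gives $\int\xi^j\,\cdot\,\nu_{\psi_k\circ\hat F}\,\dd{|\DIFF(\psi_k\circ\hat F)|}=-\int(\psi_k\circ\hat F)\dive\xi^j\,\dd{\mass}$ with $\dive\xi^j\in L^1(\mass)$. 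Letting first $k\to\infty$ (dominated convergence, as $\psi_k\circ\hat F\to\psi\circ\hat F$ pointwise and equi-bounded) and then $j\to\infty$, we obtain
\[
\int h\,D(\psi_k)\,\dd{\mu}\ \longrightarrow\ \int_K(h\circ\hat F)\,\nu_{\psi\circ\hat F}\,\cdot\,v\,\dd{|\DIFF(\psi\circ\hat F)|}\qquad(k\to\infty),
\]
a quantity depending on $\psi$ only; together with $\|D(\psi_k)\|_{L^\infty(\mu)}\le L\|w\|_\infty$ this yields $D(\psi_k)\to\ell$ weak$^*$ in $L^\infty(\mu)$, i.e.\ closability. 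We expect \emph{this} step to be the main obstacle: $\chi_K(h\circ\hat F)\,v$ is not a test vector field, so \eqref{intbyparts} cannot be applied to it directly and the limiting argument must be funneled through the test-field approximation, crucially using that the measures $|\DIFF(\psi_k\circ\hat F)|$ are uniformly dominated by $L\,|\DIFF\hat F|$ (the accompanying, essentially notational points — passing between densities, the identities $\nu_{\hat F,j}|\DIFF\hat F|=\nu_{F,j}|\DIFF F|$ for $j\le m$, measurability of the maps involved — are routine).

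\emph{Conclusion.} With closability in hand, Theorem~\ref{AMthm} gives that $\hat\varphi$ is differentiable at $\hat F(x)$ in direction $(\nu_{\hat F}\,\cdot\,v)(x)$ for $|\DIFF\hat F|\mres K$-a.e.\ $x$ (transporting the $\mu$-a.e.\ statement through the Borel isomorphism $\hat F\colon(K,|\DIFF\hat F|\mres K)\to(\hat F(K),\mu)$). Testing \eqref{intbyparts} componentwise yields $\nu_{\hat F,j}|\DIFF\hat F|=\nu_{F,j}|\DIFF F|$ for $j=1,\dots,m$, so the first $m$ coordinates of $(\nu_{\hat F}\,\cdot\,v)(x)$ equal $\theta(x)(\nu_F\,\cdot\,v)(x)$, where $\theta\in[0,1]$ is the density of $|\DIFF F|$ with respect to $|\DIFF\hat F|$; since $\hat\varphi$ does not depend on its last $n$ variables, this upgrades to the differentiability of $\varphi$ at $F(x)$ in direction $(\nu_F\,\cdot\,v)(x)$ for $|\DIFF F|$-a.e.\ $x\in K$ (as $\theta>0$ $|\DIFF F|$-a.e.). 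For the identity \eqref{maineq} we apply Lemma~\ref{ottopuntotre} to $\hat\varphi$ and the direction field $w$, obtaining $\{\psi_k\}\subseteq C^1\cap\LIP$ with $\psi_k\to\hat\varphi$ uniformly, equi-bounded Lipschitz constants, and $\nabla_{w(\zeta)}\psi_k(\zeta)\,\cdot\,w(\zeta)\to\nabla_{w(\zeta)}\hat\varphi(\zeta)\,\cdot\,w(\zeta)$ $\mu$-a.e.; then $D(\psi_k)\to\nabla_w\hat\varphi\,\cdot\,w$ in $L^1(\mu)$, so $\ell=\nabla_w\hat\varphi\,\cdot\,w$ by uniqueness of the weak$^*$ limit. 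Comparing with the limit found in the closability step and unwinding ($\hat\varphi$ independent of the last $n$ variables, $w\circ\hat F=\nu_{\hat F}\,\cdot\,v$ with first block $\theta(\nu_F\,\cdot\,v)$, and $\theta\,|\DIFF\hat F|\mres K=|\DIFF F|\mres K$) gives
\[
\nu_{\varphi\circ F}\,\cdot\,v\,|\DIFF(\varphi\circ F)|\mres K=\nabla_{\nu_F\,\cdot\,v}\varphi(F)\,(\nu_F\,\cdot\,v)\,|\DIFF F|\mres K,
\]
which is \eqref{maineq} restricted to $K$. A countable union over $i$ and over a compact exhaustion of each $G_i\cap(\XX\setminus J_F)$ finishes the proof.
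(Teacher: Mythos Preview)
Your proof is correct and follows essentially the same route as the paper: localize via Proposition~\ref{charts}, augment $F$ to make it injective, push forward to define $\mu$ and $w$, verify closability of the directional-derivative operator through integration by parts and the uniform bound $|\DIFF(\psi_k\circ\hat F)|\le L\,|\DIFF\hat F|$, invoke Theorem~\ref{AMthm} for differentiability, and close with Lemma~\ref{ottopuntotre}. The only cosmetic difference is in the approximation step: the paper approximates the scalar $h\circ F$ by $h_\epsilon\in\LIP_{bs}(\XX)$ (so that $h_\epsilon v$ is itself a test vector field), whereas you approximate the full vector field $\chi_K(h\circ\hat F)\,v$ by general compactly supported test fields $\xi^j$; both accomplish the same thing.
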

\begin{proof}
By Proposition \ref{charts} and the fact that $|\DIFF(\varphi\circ F)|\le L|\DIFF F|$, where $L$ denotes the Lipschitz constant of $\varphi$, it is enough to prove the claim on $G$, where $G\subseteq\XX\setminus J_F$ is a bounded Borel set for which there exists a Borel map $\Psi:G\rightarrow B$, where $B$ is a Borel subset of $\RR^n$ and $\Psi$ has Borel inverse $\Psi^{-1}:B\rightarrow G$ and moreover $\Psi$ is the restriction to $G$ of some $\tilde{\Psi}\in\BV(\XX)^n$. Also, $J_{\tilde{\Psi}}\cap G=\emptyset$ and we can assume that $v$ has compact support.

We set then $F'\defeq(F,\tilde{\Psi})$ and $\varphi'\defeq\varphi\circ\pi$, where $\pi:\RR^m\times\RR^n\rightarrow\RR^m$ is the projection onto the first factor. In particular, $|\DIFF F|\le |\DIFF F'|$.  Notice that  $F'$ and $\varphi'$ still satisfy the assumptions of the lemma and that still $J_{F'}\cap G=\emptyset$. Notice also that $\varphi'\circ F'=\varphi\circ F$, that ($|\DIFF F|$-a.e.) $\varphi'$ is differentiable in direction $\nu_{F'} \,\cdot \,v$ if and only if $\varphi$ is differentiable in direction $\nu_F  \,\cdot \,v$ and finally that  $$(\nu_{F'} \,\cdot \,v)_i=(\nu_F \,\cdot \, v)_i\dv{|\DIFF F|}{|\DIFF F'|} \qquad|\DIFF F'|\text{-a.e.\ for $i=1,\dots,m$}$$
so that it remains to show \eqref{maineq} on $G$ with $\varphi'$ in place of $\varphi$ and $F'$ in place of $F$.

To simplify the notation, we return to the notation $F$ and $\varphi$, keeping in mind that $F$ is injective on $G$ and its inverse is Borel. As a notation, we set $$w\defeq (\nu_F \,\cdot \,v)\circ F^{-1}\qquad\text{and}\qquad\mu\defeq F_*(|\DIFF F|\mres G).$$

Assume for the moment also that $\varphi\in C^1(\RR^m)$. Then we know that  \eqref{maineq} holds with this choice of $\varphi$. We compute now 
\begin{equation}\notag
	F_*(\nabla\varphi(F)(\nu_F  \,\cdot \,v)|\DIFF F|\mres G)=\sum_{i=1}^m\partial_i\varphi F_*((\nu_F \,\cdot \,v)_i|\DIFF F|\mres G)=\nabla\varphi \,\cdot\,w  \mu.
\end{equation}
We check that the differentiation operator depending on $\varphi$ defined above is closable in the sense of item $iii)$ of Theorem \ref{AMthm}. We have to check that if $\{\varphi_k\}_k$ is a sequence as in item $iii)$ of Theorem \ref{AMthm} then there exists $\ell\in L^\infty(\mu)$ such that for every $h\in L^1(\mu)$,
\begin{align*}
	\int_{\RR^m}h \nabla\varphi_k\,\cdot\, w \dd{\mu}\rightarrow \int_{\RR^m} h \ell \dd{\mu}.
\end{align*}
Clearly, we can assume that $\varphi_k(0)=0$ for every $k$.
Equivalently, we have to prove that 
$$
\int_\XX h\circ F \nabla \varphi_k(F)(\nu_F \,\cdot \,v)\dd{|\DIFF F|\mres G}\rightarrow \int_\XX h\circ F\ell\circ F|\nu_F \,\cdot \,v|\dd{|\DIFF F|\mres G},
$$
where $h\circ F\in L^1(|\DIFF F|\mres G)$. As also $\varphi_k\in C^1(\RR^m)$, by \eqref{maineq} we have that 
$$ \int_\XX h\circ F\nabla\varphi_k(F)(\nu_F  \,\cdot \,v)\dd{|\DIFF F|\mres G}=\int_\XX h\circ F \nu_{\varphi_k\circ F} \,\cdot \,v\dd{|\DIFF (\varphi_k \circ F)|}\mres G,$$
which is well posed, since $|\DIFF(\varphi_k\circ F)|\le L |\DIFF F|$ for every $k$, where $L\in(0,\infty)$ denotes the Lipschitz constant of the functions in $\{\varphi_k\}_k$. Also, $|\DIFF(\varphi\circ F)|\le L |\DIFF F|$.
For every $\epsilon>0$, take $h_\epsilon\in \LIP_{bs}(\XX)$ such that $$\Vert h\circ F-h_\epsilon\Vert_{L^1(|\DIFF F|\mressmall A)}<\epsilon,$$
where we understand $h\circ F=0\ |\DIFF F|$-a.e.\ on $\XX\setminus G$ and $A$ is a neighbourhood of the support of $v$. 
By the Gauss--Green integration by parts formula, (with the usual interpretation of the integrals with $\dive(h_\epsilon v)\dd{\mass}$)
\begin{align*}
-\int_\XX& h_\epsilon\nu_{\varphi_k\circ F} \,\cdot \,v\dd{|\DIFF (\varphi_k \circ F)|}=\int_\XX \varphi_k\circ F\dive(h_\epsilon v)\dd{\mass}\\&\rightarrow \int_\XX \varphi\circ F\dive(h_\epsilon v)\dd{\mass}=-\int_\XX h_\epsilon\nu_{\varphi\circ F} \,\cdot \,v\dd{|\DIFF (\varphi \circ F)|}.
\end{align*}
Now,  we have that
$$ \abs{\int_\XX( h\circ F-h_\epsilon)\nu_{\varphi_k\circ F} \,\cdot \,v\dd{|\DIFF (\varphi_k \circ F)|}}\le L\Vert v\Vert_{L^\infty(T\XX)}\Vert h\circ F-h_\epsilon\Vert_{L^1(|\DIFF F|\mressmall A)}\le L\Vert v\Vert_{L^\infty(T\XX)}\epsilon, 
$$
and a similar estimate holds for $\varphi$ in place of $\varphi_k$. Then we see that 
\begin{align*}
	\int_\XX h\circ F \nabla \varphi_k(F)(\nu_F \,\cdot \,v)\dd{|\DIFF F|\mres G}\rightarrow &\int_\XX h\circ F\nu_{\varphi\circ F} \,\cdot \,v\dd{|\DIFF (\varphi \circ F)|}\mres G\\&=\int_{\RR^m}h (\nu_{\varphi\circ F} \,\cdot \,v)\circ F^{-1} \dv{|\DIFF (\varphi\circ F)|}{|\DIFF F|}\circ F^{-1}\dd{\mu}. 
\end{align*}
This provides the existence of the sought $\ell\in L^\infty(\mu)$.

Therefore we can apply Theorem \ref{AMthm}. It follows that if $\varphi$ is as in the statement, then $\varphi$ is differentiable in direction $w$ $\mu$-a.e. In other words, at $|\DIFF F|\mres G$-a.e.\ $x$, $\varphi$ is differentiable at $F(x)$ in direction $(\nu_F \,\cdot \, v)(x)$.

Take now $h\in L^1(|\DIFF F|\mres G)$. We approximate $\varphi$ with a sequence $\{\varphi_k\}_k$ as in Lemma \ref{ottopuntotre}. Using \eqref{maineq}, we see that for every $k$
$$
\int_\XX h\nu_{\varphi_k\circ F} \,\cdot \,v\dd{|\DIFF(\varphi_k\circ F)|\mres G}=\int_\XX h\nabla\varphi_k(F)(\nu_F \,\cdot \,v)\dd{|\DIFF F|\mres G}.
$$
Using dominated convergence to deal with the right hand side and by the very same computations as above to deal with the left hand side, we prove that \eqref{maineq} holds for $\varphi$.
\end{proof}

In view of the following result, recall the definition of $\essspan$, by Lemma \ref{essspan}.
\begin{thm}\label{gcthm}
Let $(\XX,\dist,\mass)$ be an $\RCD(K,N)$ space, let $F\in\BV_{loc}(\XX)^m$ and let $\varphi\in \LIP(\RR^m,\RR^l)$.
Then  
\begin{equation}\label{genjump}
		\nu_{\varphi\circ F}|\DIFF(\varphi\circ F)|\mres J_F=\frac{\varphi(F^r)-\varphi(F^l)}{|F^r-F^l|} \nu_F^J {|{\DIFF F}|}\mres  J_F.
\end{equation}
Set now $V\defeq |\DIFF F|-\essspan\nu_F$. Then for $|\DIFF F|$-a.e.\ $x\notin J_F$, $\varphi$ is differentiable at $F(x)$ with respect to $V$ and it holds
\begin{equation}\label{genjnump}
	\nu_{\varphi\circ F}|\DIFF(\varphi\circ F)|\mres (\XX\setminus J_F)=\nabla_V\varphi(F) \nu_F {|{\DIFF F}|}\mres (\XX\setminus J_F).
\end{equation}
\end{thm}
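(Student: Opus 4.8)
The plan is to deduce Theorem \ref{gcthm} from Lemma \ref{mainlemma} essentially by testing against all test vector fields $v\in\TestV(\XX)$ and assembling the information. The jump part \eqref{genjump} is the folklore part: since $J_{\varphi\circ F}\subseteq J_F$ and the one-sided traces of $\varphi\circ F$ at a jump point $x$ are $\varphi(F^r(x))$ and $\varphi(F^l(x))$ (with the same orienting set of finite perimeter $E_x$), the identity $(\varphi(F^r)-\varphi(F^l))\nu_F^J=(\ (\varphi\circ F)^\vee-(\varphi\circ F)^\wedge\ )\nu_{\varphi\circ F}$ holds $|\DIFF F|\mres J_F$-a.e., exactly as in the scalar calculus rules recalled in Section \ref{sectprel}; I would just cite that one-dimensional blow-up computation, as it does not interact with the differentiability issue.

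For the absolutely-continuous-plus-Cantor part \eqref{genjnump}, first I would reduce to $l=1$ componentwise. Then I would apply Lemma \ref{mainlemma} to $\varphi$ and to each $v$ ranging over a fixed \emph{countable} family $\{v_j\}_j\subseteq\TestV(\XX)$ that is dense enough to detect $L^0_\capa(T\XX)$ and to separate the $\Gr(\RR^m)$-valued maps; concretely, one can take the test vector fields produced by Theorem \ref{captangent} on each piece $A_k$ together with the countable generating set used there. By Lemma \ref{mainlemma}, for each $j$ there is a $|\DIFF F|$-conull set (off $J_F$) on which $\varphi$ is differentiable at $F(x)$ in direction $(\nu_F\cdot v_j)(x)$; intersecting over $j$ gives a single $|\DIFF F|\mres(\XX\setminus J_F)$-conull set on which $\varphi$ is differentiable at $F(x)$ in \emph{every} direction $(\nu_F\cdot v_j)(x)$ simultaneously. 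On a piece $A_k$ with orthonormal basis $v_1^k,\dots,v_{n(k)}^k$ the vectors $(\nu_F\cdot v_i^k)(x)$ span exactly the subspace $V(x)=(|\DIFF F|-\essspan\nu_F)(x)$ by the very definition in Lemma \ref{essspan}; since differentiability in each of a spanning set of directions at a point of $\RR^m$ upgrades to differentiability with respect to their linear span (this is the classical linear-algebra fact for Lipschitz functions, e.g.\ via the decomposability bundle or directly), $\varphi$ is differentiable at $F(x)$ with respect to $V(x)$, and $\nabla_V\varphi(F(x))\cdot(\nu_F\cdot v_i^k)(x)=\nabla_{(\nu_F\cdot v_i^k)(x)}\varphi(F(x))\cdot(\nu_F\cdot v_i^k)(x)$.

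It then remains to upgrade the scalar identities \eqref{maineq}, valid for each $v_j$, to the vector/measure identity \eqref{genjnump}. By the notational convention for $\nu\mu=\nu'\mu'$ recalled in Section \ref{sectprel}, it suffices to check $\int_\XX v\cdot\nu_{\varphi\circ F}\dd{|\DIFF(\varphi\circ F)|\mres(\XX\setminus J_F)}=\int_\XX v\cdot(\nabla_V\varphi(F)\nu_F)\dd{|\DIFF F|\mres(\XX\setminus J_F)}$ for every $v\in\TestV(\XX)$ with compact support. Localizing to $A_k$ and writing $v=\sum_i g_i v_i^k$ $\capa$-a.e.\ on $A_k$ with $g_i=v\cdot v_i^k$, the left side becomes $\sum_i\int_{A_k} g_i(\nu_{\varphi\circ F}\cdot v_i^k)\dd{|\DIFF(\varphi\circ F)|}$ and the right side $\sum_i\int_{A_k} g_i\,\nabla_V\varphi(F)\cdot(\nu_F\cdot v_i^k)\dd{|\DIFF F|}$, and these agree term by term precisely by \eqref{maineq} applied with $v=v_i^k$ (using the displayed identification $\nabla_V\varphi(F)\cdot(\nu_F\cdot v_i^k)=\nabla_{\nu_F\cdot v_i^k}\varphi(F)\cdot(\nu_F\cdot v_i^k)$ from the previous step). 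Summing over the countable partition $\{A_k\}$ via dominated convergence (all measures involved are controlled by $L|\DIFF F|$) yields \eqref{genjnump}. The main obstacle I anticipate is the bookkeeping at the interface of the three formalisms — the $\capa$-module decomposition of Theorem \ref{captangent}, the pointwise $\Gr(\RR^m)$-valued $\essspan$ of Lemma \ref{essspan}, and the classical "differentiable in spanning directions $\Rightarrow$ differentiable on the span" upgrade — making sure the single exceptional $|\DIFF F|$-null set is chosen before, not after, one starts quantifying over the uncountably many $v$, which is exactly why the countable dense/generating family from Theorem \ref{captangent} is essential.
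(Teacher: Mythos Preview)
Your overall architecture matches the paper's: reduce to $l=1$, combine Lemma \ref{mainlemma} with the local frames from Theorem \ref{captangent} to obtain differentiability with respect to $V$, and then assemble \eqref{genjnump} by testing against the basis vector fields. Your assembly of the measure identity \eqref{genjnump} from the scalar identities \eqref{maineq} is correct and in fact more explicit than what the paper writes; your handling of the jump part (via traces rather than the paper's $C^1$ approximation) is a legitimate variant.

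There is, however, a genuine gap in the differentiability upgrade. You assert that ``differentiability in each of a spanning set of directions at a point of $\RR^m$ upgrades to differentiability with respect to their linear span'' and call this a ``classical linear-algebra fact for Lipschitz functions''. As a pointwise statement this is false: the Lipschitz function $\varphi(x,y)=x^3/(x^2+y^2)$ (with $\varphi(0,0)=0$) has directional derivatives $1$ and $0$ at the origin in the directions $e_1,e_2$, yet its directional derivative in direction $(a,b)$ is $a^3$, not $a$, so it is not differentiable there. Applying Lemma \ref{mainlemma} only to the finitely many $v_i^k$ therefore gives you nothing beyond directional differentiability in those finitely many directions $(\nu_F\cdot v_i^k)(x)$.

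The paper closes this gap by applying Lemma \ref{mainlemma} to \emph{every} $v=\sum_i q_i v_i^k$ with $q_i\in\mathbb{Q}$. Intersecting these countably many conull sets yields, for $|\DIFF F|$-a.e.\ $x\in A_k\setminus J_F$, directional differentiability of $\varphi$ at $F(x)$ in every direction of the $\mathbb{Q}$-span of $\{(\nu_F\cdot v_i^k)(x)\}_i$, which is dense in $V(x)$. Crucially, comparing \eqref{maineq} for $v=\sum_i q_i v_i^k$ with the sum of \eqref{maineq} for the individual $v_i^k$ (the right-hand side being manifestly linear in $v$) forces the directional derivatives to agree with a single linear map on this dense $\mathbb{Q}$-span. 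From ``Lipschitz $+$ directional derivatives exist on a dense set of directions in $V$ and are given there by a linear map'', Fr\'echet differentiability with respect to $V$ \emph{does} follow by a straightforward equicontinuity argument; this is the actual classical step the paper invokes. Your decomposability-bundle alternative can also be made to work, but it requires more than you indicate: one must push forward to $\RR^m$ (hence re-enter the injectivity reduction via Proposition \ref{charts} used inside the proof of Lemma \ref{mainlemma}), note that Lemma \ref{mainlemma} for \emph{all} Lipschitz $\varphi$ gives item $ii)$ and hence item $i)$ of Theorem \ref{AMthm} for each $w_i$, and then invoke the full Alberti--Marchese theorem that every Lipschitz function is differentiable with respect to the subspace $V(\mu,\cdot)\supseteq\langle w_i\rangle$.
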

In the theorem above, by $\nabla_V\varphi(F) \nu_F $ we mean the unique, up to $|\DIFF F|$-a.e.\ equality, vector field in $L^0_\capa(T\XX)^l$ such that for every $v\in L^0_\capa(T\XX)$ it holds
$$
(\nabla_V\varphi(F) \nu_F ) \,\cdot \, v=\nabla_V\varphi(F) (\nu_F  \,\cdot \,v)\qquad  |\DIFF F|\text{-a.e.}
$$
\begin{proof}
	Denote by $n$ the essential dimension of $(\XX,\dist,\mass)$.
	It is easy to see that we can reduce ourselves to the case $l=1$.
	
We start from \eqref{genjump}. Notice first that $J_{\varphi\circ F}\subseteq J_F$. Take $\{\varphi_k\}_k\subseteq C^1(\RR^m)\cap\LIP(\RR^m)$ equi-Lipschitz, with $\varphi_k(0)=0$ for every $k$ and $\varphi_k\rightarrow\varphi$ locally uniformly.  Take any $v\in\TestV(\XX)$ with compact support.  

We know that 
$$
\int_\XX \nu_{\varphi_k\circ F} \,\cdot \,v\dd{|\DIFF(\varphi_k\circ F)|\mres J_F}=\int_\XX \frac{\varphi_k(F^r)-\varphi_k(F^l)}{\abs{F^r-F^l}}\nu_F^J \,\cdot \, v\dd{|\DIFF F|\mres J_F}.
$$
Therefore, as $k\rightarrow\infty$,
$$ \int_\XX \nu_{\varphi\circ F} \,\cdot \,v\dd{|\DIFF(\varphi\circ F)|\mres J_F}=\int_\XX \frac{\varphi(F^r)-\varphi(F^l)}{|{F^r-F^l}|}\nu_F^J \,\cdot \, v\dd{|\DIFF F|\mres J_F},$$
where convergence of the left hand side is proved as in the proof of Lemma \ref{mainlemma} whereas we use dominated convergence for the right hand side.
Being $v$ arbitrary, \eqref{genjump} follows.

We prove now the second part. We first show that for $|\DIFF F|$-a.e.\ $x\notin J_F$, $\varphi$ is differentiable at $F(x)$ with respect to $V$. Recalling the construction of $V$ in  Lemma \ref{essspan} (in particular, Theorem \ref{captangent}), it is enough to show this claim on a Borel subset $A$ on which we an orthonormal basis of $L^0_\capa(T\XX)$, say $\{v_1,\dots,v_k\}\subseteq\TestV(\XX)$: namely, we have to show differentiability at $F(x)$ with respect to $\langle\{\nu_F \,\cdot \, v_i\}_{i=1,\dots,k}\rangle$. 

By Lemma \ref{mainlemma}, if $v\in\TestV(\XX)$, for $|\DIFF F|$-a.e.\ $x\in A\setminus J_F$ it holds that $\varphi$ is differentiable at $F(x)$ in direction $\nu_F  \,\cdot \,v$. Therefore, for  $|\DIFF F|$-a.e.\ $x\in A\setminus J_F$
$\varphi$ is differentiable at $F(x)$ in every direction contained in $\langle\{\nu_F \,\cdot \, v_i\}_{i=1,\dots,k}\rangle_{\mathbb{Q}}$. Lemma \ref{mainlemma} again shows that the differential on $\langle\{\nu_F \,\cdot \, v_i\}_{i=1,\dots,k}\rangle_{\mathbb{Q}}$ is linear, up to discarding a set of null $|\DIFF F|\mres (A\setminus J_F)$ measure. 
It is then classical to infer from this the conclusion.
\end{proof}


\begin{thebibliography}{KKST11}
	
	\bibitem[ABM23]{ABM23}
	Giovanni Alberti, David Bate, and Andrea Marchese.
	\newblock On the closability of differential operators.
	\newblock Preprint, arXiv:, 2023, 2023.
	
	\bibitem[ABP22]{ABPrank}
	Gioacchino Antonelli, Camillo Brena, and Enrico Pasqualetto.
	\newblock The {R}ank-{O}ne theorem on $\mathrm{RCD}$ spaces.
	\newblock Preprint, arXiv: 2204.04921, 2022.
	
	\bibitem[ABS18]{ambrosio2018rigidity}
	Luigi Ambrosio, Elia Bru{\`e}, and Daniele Semola.
	\newblock Rigidity of the 1-{B}akry-{{\'E}}mery inequality and sets of finite
	perimeter in {RCD} spaces.
	\newblock {\em Geom. Funct. Anal.}, page~45, 2018.
	
	\bibitem[ADM90]{AmbDM}
	Luigi Ambrosio and Gianni Dal~Maso.
	\newblock A {G}eneral {C}hain rule for {D}istributional {D}erivatives.
	\newblock {\em Proceedings of the American Mathematical Society},
	108(3):691--702, 1990.
	
	\bibitem[AFP00]{AFP00}
	Luigi Ambrosio, Nicola Fusco, and Diego Pallara.
	\newblock {\em Functions of bounded variation and free discontinuity problems}.
	\newblock Clarendon Press, Oxford New York, 2000.
	
	\bibitem[AM16]{AM16}
	Giovanni Alberti and Andrea Marchese.
	\newblock On the differentiability of {L}ipschitz functions with respect to
	measures in the {E}uclidean space.
	\newblock {\em Geom. Funct. Anal.}, 26(1):1--66, 2016.
	
	\bibitem[Amb01]{amb00}
	Luigi Ambrosio.
	\newblock Some {F}ine {P}roperties of {S}ets of {F}inite {P}erimeter in
	{A}hlfors {R}egular {M}etric {M}easure {S}paces.
	\newblock {\em Advances in Mathematics}, (159):51--67, 2001.
	
	\bibitem[Amb02]{amb01}
	Luigi Ambrosio.
	\newblock Fine properties of sets of finite perimeter in doubling metric
	measure spaces.
	\newblock {\em Set Valued Analysis}, 10:111--128, 2002.
	
	\bibitem[Amb18]{AmbICM}
	Luigi Ambrosio.
	\newblock Calculus, heat flow and curvature-dimension bounds in metric measure
	spaces.
	\newblock In {\em Proceedings of the {I}nternational {C}ongress of
		{M}athematicians---{R}io de {J}aneiro 2018. {V}ol. {I}. {P}lenary lectures},
	pages 301--340. World Sci. Publ., Hackensack, NJ, 2018.
	
	\bibitem[AMP04]{ambmirpal04}
	Luigi Ambrosio, Michele {Miranda Jr.}, and Diego Pallara.
	\newblock Special {F}unctions of {B}ounded {V}ariation in {D}oubling {M}etric
	{M}easure {S}paces.
	\newblock {\em Quad. Mat.}, 14:1--45, 2004.
	
	\bibitem[AMS14]{AmbrosioMondinoSavare13-2}
	Luigi Ambrosio, Andrea Mondino, and Giuseppe Savar{\'e}.
	\newblock On the {B}akry-\'{E}mery condition, the gradient estimates and the
	{L}ocal-to-{G}lobal property of ${RCD}^*({K}, {N})$ metric measure spaces.
	\newblock {\em The Journal of Geometric Analysis}, 26(1):1--33, 2014.
	
	\bibitem[APS15]{Ambrosio-Pinamonti-Speight15}
	Luigi Ambrosio, Andrea Pinamonti, and Gareth Speight.
	\newblock Tensorization of {C}heeger energies, the space {$H^{1,1}$} and the
	area formula for graphs.
	\newblock {\em Adv. Math.}, 281:1145--1177, 2015.
	
	\bibitem[BG22]{BGBV}
	Camillo Brena and Nicola Gigli.
	\newblock Calculus and fine properties of functions of bounded variation on
	{RCD} spaces.
	\newblock Preprint, arXiv:2204.04174, 2022.
	
	\bibitem[BPS21a]{bru2021constancy}
	Elia Bru{\`e}, Enrico Pasqualetto, and Daniele Semola.
	\newblock Constancy of the dimension in codimension one and locality of the
	unit normal on {$\mathrm{RCD}(K,N)$} spaces, 2021.
	
	\bibitem[BPS21b]{BruPasSem20}
	Elia Bru\`e, Enrico Pasqualetto, and Daniele Semola.
	\newblock Rectifiability of {$\rm{RCD}(K,N)$} spaces via {$\delta$}-splitting
	maps.
	\newblock {\em Ann. Fenn. Math.}, 46(1):465--482, 2021.
	
	\bibitem[BPS22]{bru2019rectifiability}
	Elia Bru{\`e}, Enrico Pasqualetto, and Daniele Semola.
	\newblock {R}ectifiability of the reduced boundary for sets of finite perimeter
	over ${{\mathrm{RCD}}(K,N)}$ spaces.
	\newblock {\em J. Eur. Math. Soc.}, 2022.
	
	\bibitem[DGP20]{debin2019quasicontinuous}
	Cl{\'e}ment Debin, Nicola Gigli, and Enrico Pasqualetto.
	\newblock Quasi-{C}ontinuous {V}ector {F}ields on {RCD} spaces.
	\newblock {\em Potential Analysis}, 2020.
	
	\bibitem[DM14]{DiM14a}
	Simone Di~Marino.
	\newblock {Recent advances on {BV} and Sobolev Spaces in metric measure
		spaces}.
	\newblock {\em PhD Thesis}, 2014.
	
	\bibitem[Gig14]{Gigli14}
	Nicola Gigli.
	\newblock Nonsmooth differential geometry - an approach tailored for spaces
	with {R}icci curvature bounded from below.
	\newblock {\em Mem. Amer. Math. Soc.}, 251(1196):v+161, 2014.
	
	\bibitem[Gig15]{Gigli12}
	Nicola Gigli.
	\newblock On the differential structure of metric measure spaces and
	applications.
	\newblock {\em Mem. Amer. Math. Soc.}, 236(1113):vi+91, 2015.
	
	\bibitem[Gig23]{gigli2023giorgi}
	Nicola Gigli.
	\newblock {De Giorgi and Gromov working together}.
	\newblock Preprint, arXiv: 2306.14604, 2023.
	
	\bibitem[GMS98]{GMSCartCurr}
	Mariano Giaquinta, Guiseppe Modica, and Jiri Soucek.
	\newblock {\em Cartesian currents in the calculus of variations. {I}}.
	\newblock Springer-Verlag Berlin Heidelberg, 1998.
	
	\bibitem[KKST11]{kinkorshatuo}
	Juha Kinnunen, Riikka Korte, Nageswari Shanmugalingam, and Heli Tuominen.
	\newblock Pointwise properties of functions of bounded variation in metric
	spaces.
	\newblock {\em Revista Matem{\'a}tica Complutense}, 27, 2011.
	
	\bibitem[LV09]{Lott-Villani09}
	John Lott and C{\'e}dric Villani.
	\newblock Ricci curvature for metric-measure spaces via optimal transport.
	\newblock {\em Ann. of Math. (2)}, 169(3):903--991, 2009.
	
	
	\bibitem[{Mir}03]{MIRANDA2003}
	Michele {Miranda Jr.}
	\newblock Functions of bounded variation on ``good'' metric spaces.
	\newblock {\em Journal de Math{\'e}matiques Pures et Appliqu{\'e}es}, 82(8):975
	-- 1004, 2003.
	
	\bibitem[MN19]{Mondino-Naber14}
	Andrea Mondino and Aaron Naber.
	\newblock Structure theory of metric measure spaces with lower {R}icci
	curvature bounds.
	\newblock {\em Journal of the European Mathematical Society}, 21(6):1809--1854,
	Mar 2019.
	
	\bibitem[Stu06a]{Sturm06I}
	Karl-Theodor Sturm.
	\newblock On the geometry of metric measure spaces. {I}.
	\newblock {\em Acta Math.}, 196(1):65--131, 2006.
	
	\bibitem[Stu06b]{Sturm06II}
	Karl-Theodor Sturm.
	\newblock On the geometry of metric measure spaces. {II}.
	\newblock {\em Acta Math.}, 196(1):133--177, 2006.
	
	\bibitem[VH85]{vol1985analysis}
	A.~I. Vol'pert and S.~I. Hudjaev.
	\newblock {\em Analysis in {C}lasses of {D}iscontinuous {F}unctions and
		{E}quations of {M}athematical {P}hysics}.
	\newblock Mechanics: Analysis. Springer Netherlands, 1985.
	

	
	\bibitem[Vil17]{Villani2017}
	C{\'e}dric Villani.
	\newblock {I}n\'egalit\'es isop\'erim\'etriques dans les espaces m\'etriques
	mesur\'es [d'apr\`es {F}. {C}avalletti \& {A}. {M}ondino].
	\newblock S\'eminaire {B}ourbaki, available at:
	http://www.bourbaki.ens.fr/TEXTES/1127.pdf, 2017.

	
	\bibitem[Vol67]{Vol_pert_1967}
	A.~I. Vol'pert.
	\newblock {S}paces {BV} and quasilinear equations.
	\newblock {\em Mathematics of the {USSR}-Sbornik}, 2(2):225--267, feb 1967.
	
\end{thebibliography}
\end{document}